\newtheorem{problem}{Problem}
\newtheorem{theorem}{Theorem}
\newtheorem*{theorem*}{Theorem}
\newtheorem{lemma}[theorem]{Lemma}
\newtheorem{proposition}[theorem]{Proposition}
\newtheorem{definition}[theorem]{Definition}
\newtheorem{claim}[theorem]{Claim}
\newcommand*{\myproofname}{Proof}
\newenvironment{claimproof}[1][\myproofname]{\begin{proof}[#1]}{\end{proof}}
\newtheorem*{claim*}{Claim}
\newtheorem*{question*}{Question}
\newcommand*{\ceilfrac}[2]{\mathopen{}\left\lceil\frac{#1}{#2}\right\rceil\mathclose{}}
\newcommand*{\abs}[1]{\lvert #1\rvert}
\newcommand\blfootnote[1]{%
 \begin{NoHyper}
 \renewcommand\thefootnote{}\footnote{#1}%
 \addtocounter{footnote}{-1}%
 \end{NoHyper}
}
\def\eps{\varepsilon}
\title{Bounding mean orders of sub-$k$-trees of $k$-trees}
\author{Stijn Cambie \and Bradley McCoy \and Stephan Wagner \and Corrine Yap}
\date{\today}
\begin{document}
\parindent=0cm

\maketitle

\begin{abstract}
    For a $k$-tree $T$, we prove that the maximum local mean order is attained in a $k$-clique of degree $1$ and that it is not more than twice the global mean order. We also bound the global mean order if $T$ has no $k$-cliques of degree $2$ and prove that for large order, the $k$-star attains the minimum global mean order.
    These results solve the remaining problems of Stephens and Oellermann [J. Graph Theory 88 (2018), 61-79] concerning the mean order of sub-$k$-trees of $k$-trees.
\end{abstract}

\blfootnote{2020 Mathematics Subject Classification. Primary 05C05, 05C35}
\blfootnote{Keywords: trees, $k$-trees, average subtree order}

\vspace{-1cm}
\section{Introduction}\label{sec:intro}

In \cite{jamison_average_1983} and \cite{jamison_monotonicity_1984} Jamison considered the mean number of nodes in subtrees of a given tree. He showed that for trees of order $n$, the average number of nodes in a subtree of $T$ is at least $(n + 2)/3$, with this minimum achieved if and only if $T$ is a path. He also showed that the average number of nodes in a subtree containing a root is at least $(n + 1)/2$ and always exceeds the average over all unrooted subtrees. The mean subtree order in trees was further investigated, e.g. in~\cite{VW10,John14,CWW21,LXWW23,Ruoyu23}, as well as extensions to arbitrary graphs~\cite{CGMV18,CM21,CCHT23,CWL23} and the mean order of the connected induced subgraphs of a graph~\cite{Vince20,Vince21,John22,John22b,Vince22}.

In~\cite{SO18}, Stephens and Oellermann extended the study to $k$-trees and families of sub-$k$-trees. A \emph{$k$-tree} is a generalization of a tree that has the following recursive construction.

\begin{definition}[$k$-tree]
Let $k$ be a fixed positive integer.
 \begin{enumerate}
 \item The complete graph $K_k$ is a $k$-tree.
 \item If $T$ is a $k$-tree, then so is the graph obtained from $T$ by joining a new vertex to all vertices of some $k$-clique of $T$.
 \item There are no other $k$-trees.
 \end{enumerate}
\end{definition}

Note that for $k=1$ we have the standard recursive construction of trees. A \emph{sub-$k$-tree} of a $k$-tree $T$ is a subgraph that is itself a $k$-tree. Let $S(T)$ denote the collection of all sub-$k$-trees of $T$ and let $N(T):=|S(T)|$ be the number of sub-$k$-trees. We denote by $R(T)= \sum_{X \in S(T)} \abs X$ the total number of vertices in all sub-$k$-trees (where for a graph $G$ the notation $|G|$ is used throughout to mean the number of vertices in $G$). The \emph{global mean (sub-$k$-tree) order} is 
$$\mu(T)=\frac{R(T)}{N(T)}.$$

For an arbitrary $k$-clique $C$ of $T$, let $S(T;C)$ denote the collection of sub-$k$-trees containing $C$ and let $N(T;C):=|S(T;C)|$. The \emph{local clique number} is $R(T;C)= \sum_{X \in S(T;C)} \abs X$ and the \emph{local mean (sub-$k$-tree) order} is
$$\mu(T;C)=\frac{R(T;C)}{N(T;C)}.$$ The \emph{degree} of $C$ is the number of $(k+1)$-cliques that contain $C$.

Stephens and Oellermann concluded their study of the mean order of sub-$k$-trees of $k$-trees with several open questions. Our main contribution here is to answer three of them.

It was conjectured by Jamison in \cite{jamison_average_1983} and proven by Vince and Wang in \cite{VW10} that for trees of order $n$ without vertices of degree $2$---called {\em series-reduced} trees---the global mean subtree order is between $\frac{n}{2}$ and $\frac{3n}{4}$. For $k$-trees we provide similar asymptotically sharp bounds, answering \cite[Problem 6]{SO18}.

\begin{restatable}
{theorem}{seriesreduced}
\label{thr:prob6}
 For every $k$-tree $T$ without $k$-cliques of degree $2,$ the global mean sub-$k$-tree order satisfies
 $$ \frac {n+k}2 -o_n(1) < \mu(T)< \frac{3n+k-3}{4}.$$
 These bounds are asymptotically sharp. In particular, for large $k$, $\frac{3n}{4}$ is not an upper bound. 
 
For large $n$, the $k$-star is the unique extremal $k$-tree for the lower bound.
\end{restatable}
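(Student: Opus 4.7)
The plan is to prove the lower and upper bounds by induction on $n$, via the leaf-stripping operation combined with the bound $\mu(T;C)\le 2\mu(T)$ already established earlier in the paper. Let $v$ be a simplicial vertex attached to the $k$-clique $C$ in $T$, and set $T'=T-v$. Partitioning sub-$k$-trees of $T$ by whether they contain $v$, and using that sub-$k$-trees through $v$ are either one of the $k$ many $k$-cliques meeting $v$ or the unique extension by $v$ of a sub-$k$-tree of $T'$ containing $C$, yields the two identities
\[
 N(T) = N(T') + k + N(T';C), \qquad R(T) = R(T') + k^2 + R(T';C) + N(T';C),
\]
on which the whole argument rests.

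For the upper bound $\mu(T)<\tfrac{3n+k-3}{4}$, I would follow the Vince--Wang strategy for trees ($k=1$). Applying the inductive hypothesis on $T'$ together with the local mean bound $R(T';C)\le 2\mu(T')\,N(T';C)$ --- applied at the simplicial clique through $v$ --- should control both the added numerator $k^2+R(T';C)+N(T';C)$ and denominator $k+N(T';C)$; the desired inequality then follows by algebraic manipulation. Asymptotic sharpness I would establish by an explicit construction generalizing the caterpillar-type trees used for $k=1$ to $k$-trees with many simplicial cliques attached to a long ``spine'' of $(k+1)$-cliques.

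For the lower bound $\mu(T)>\tfrac{n+k}{2}-o_n(1)$, I would track the deviation $\Delta(T):=\mu(T)-\tfrac{n+k}{2}$. The identities above translate into a recursive formula for $\Delta(T)$ whose sign of increment is governed by whether $k^2+N(T';C)-\tfrac{n+k}{2}(k+N(T';C))$ is positive, i.e.\ by the ratio $N(T';C)/N(T')$. The degree-$2$-free assumption forces the clique tree to branch freely, which in turn forces this ratio to be bounded away from $0$ at simplicial cliques, driving $\Delta(T)$ into the $-o_n(1)$ range. The direct computation on the $k$-star gives $\Delta(T)=-k(n-k)^2\bigl/\bigl(2(k(n-k)+2^{n-k})\bigr)=-o_n(1)$, which confirms both asymptotic sharpness and the identity of the extremizer; uniqueness of the $k$-star for large $n$ would then follow from a stability argument, showing that any non-$k$-star structure produces a correction of order $\Omega(1)$ rather than $o(1)$.

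The main obstacle is that stripping a simplicial vertex $v$ can drop the attachment clique $C$ from degree $3$ in $T$ to degree $2$ in $T'$, breaking the degree-$2$-free hypothesis needed to apply induction. My plan to circumvent this is either to strip all simplicial vertices attached to a single clique $C$ simultaneously --- so $C$'s degree falls from $d$ to $d-s$ in one step, and a careful choice of $C$ based on the combinatorics of the clique tree ensures $d-s\notin\{2\}$ --- or to prove a strengthened induction hypothesis that tolerates a bounded number of distinguished degree-$2$ cliques by paying an explicit, controllable error term. The strengthened-hypothesis route is likely cleaner but requires careful bookkeeping to ensure the accumulated error does not swamp the $o_n(1)$ gap in the lower bound or the additive $(k-3)/4$ slack in the upper bound.
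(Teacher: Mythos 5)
Your recursion identities for leaf-stripping are correct, but the proof does not go through as sketched, for two reasons beyond the obstacle you already flag. First, for the upper bound, the two ingredients you name --- the inductive hypothesis $\mu(T')<\frac{3(n-1)+k-3}{4}$ and the bound $R(T';C)\le 2\mu(T')\,N(T';C)$ --- are genuinely insufficient to "follow by algebraic manipulation." Writing $N'=N(T')$ and $M=N(T';C)$, the worst case permitted by your constraints is $R(T')=\mu(T')N'$ with $\mu(T')$ at its inductive ceiling and $R(T';C)=2\mu(T')M$, giving
$$\mu(T)\le\frac{\mu(T')N'+2\mu(T')M+M+k^2}{N'+M+k},$$
and since $M$ can be as large as $N'$ this only yields roughly $\tfrac32\mu(T')\approx\tfrac{9n}{8}$, far above the target. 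The factor $2$ is too lossy here: one needs joint control of $M/N'$ and $\mu(T';C)/\mu(T')$ (they cannot both be large at once), and nothing in your sketch supplies it. Second, the obstacle you identify --- deleting a simplicial vertex can turn a degree-$3$ clique into a degree-$2$ clique --- is fatal to the induction as stated, and neither workaround is carried out: you give no proof that a clique with $d-s\neq 2$ always exists (and stripping $s\ge 2$ vertices at once changes your recursion identities), while the "strengthened hypothesis with error terms" is precisely where the difficulty lives, since for the lower bound the total slack is only $o_n(1)$ and an $\Omega(1/n)$ loss per step over $n-k$ steps already destroys the bound. Your lower-bound sketch additionally rests on the unproven assertion that $N(T';C)/N(T')$ is bounded away from $0$ at simplicial cliques, and even granting it you would still need the per-step decrements of $\Delta(T)$ to sum to $o(1)$, which is not addressed.

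For contrast, the paper avoids induction entirely. For the upper bound it writes $\mu(T)=\sum_v p(v)$, shows via the $1$-characteristic tree that degree-$2$-freeness forces at least $\frac{n-k+3}{2}$ $k$-leaves, and exhibits an injection proving each $k$-leaf lies in at most half of all sub-$k$-trees, giving the bound in one stroke. For the lower bound it finds a single centroid-like clique $C'$ and shows $N(T;C')\ge\frac{1}{n^2}2^{(n-k)/4}\,\overline N(T;C')$ (using that degree-$2$-freeness forces $\ge\frac{n-k}{2}$ simplicial vertices), so $\mu(T)$ is within $o_n(1)$ of $\mu(T;C')\ge\frac{n+k}{2}$; uniqueness of the $k$-star for large $n$ then follows from Haslegrave's quantitative gain of $\frac{i-1}{10}$ for $i$ internal vertices of the characteristic tree, which beats the $o_n(1)$ error --- this is the concrete stability argument your sketch only gestures at. Your $k$-star computation and the general shape of the sharpness examples do match the paper's.
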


Wagner and Wang proved in \cite{WW16} that the maximum local mean subtree order occurs at a leaf or a vertex of degree $2$. We prove an analogous result for $k$-trees, answering \cite[Problem 4]{SO18}. In contrast to the result for trees, it turns out that for $k \ge 2$, the maximum can only occur at a $k$-clique of degree $1$, unless $T$
 is a $k$-tree of order $k+2$.

\begin{restatable}
{theorem}{maxlocalmean}
\label{thr:prob4}
Suppose that $k \ge 2$. 
For a $k$-tree $T$ of order $n \not=k+2$, if a $k$-clique $C$ maximizes $\mu(T; C)$, then $C$ must be a $k$-clique of degree $1$. For $n=k+2,$ every $k$-clique $C$ satisfies $\mu(T; C)=k+1.$
\end{restatable}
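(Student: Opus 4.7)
The plan is: for any $k$-clique $C$ of degree $d \ge 2$ in $T$ with $n > k+2$, exhibit an adjacent $k$-clique $C'$ satisfying $\mu(T; C') > \mu(T; C)$. The excluded case $n = k+2$ is handled separately: $T$ is then $K_{k+2}$ minus an edge, and a direct case analysis over the two isomorphism orbits of $k$-cliques of $T$ yields $\mu(T; C) = k+1$ for every $k$-clique.

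To set up the main case, I first establish a \emph{branch decomposition} at $C$: writing the $(k+1)$-cliques containing $C$ as $Q_i = C \cup \{v_i\}$, $i = 1, \ldots, d$, define $B_i$ to be the maximal sub-$k$-tree of $T$ with $Q_i \subseteq B_i$ in which $C$ has degree $1$. These branches satisfy $B_i \cap B_j = C$ for $i \ne j$ and $\bigcup_i B_i = T$, so sub-$k$-trees of $T$ containing $C$ correspond bijectively to tuples of sub-$k$-trees of the $B_i$'s containing $C$. Letting $F(T';D)(x) := \sum_X x^{|X|-k}$ denote the generating polynomial taken over sub-$k$-trees $X$ of $T'$ with $V(D) \subseteq V(X)$, this gives $F(T;C) = \prod_i F(B_i; C)$. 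The central structural fact -- the \emph{leaf property} -- is that in each $B_i$, the only sub-$k$-tree containing $V(C)$ but not $v_i$ is $C$ itself, since any larger sub-$k$-tree constructed starting from $V(C)$ must extend through a $(k+1)$-clique, and $Q_i$ is the unique such $(k+1)$-clique in $B_i$. Hence $F(B_i; C) = 1 + F(B_i; Q_i)$.

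For an adjacent clique $C' = (V(C) \setminus \{c\}) \cup \{v_j\}$, decomposing $B_j$ at $C'$ into a back branch $B_j^{(0)}$ through $Q_j$ and forward branches $B_j^{(1)}, \ldots, B_j^{(d'_j - 1)}$, and invoking the leaf property within $B_j^{(0)}$ at $C'$, one derives the clean identity
$$
F(T; C')(x) - F(T; C)(x) \;=\; \Psi(x) - \Phi(x),
\quad \text{where } \Phi := \prod_{i \ne j} F(B_i; C), \;\; \Psi := \prod_{i \ge 1} F(B_j^{(i)}; C').
$$
Differentiating and evaluating at $x=1$, the sign of $\mu(T; C') - \mu(T; C)$ agrees with the sign of $\Psi(1)(L_\Psi - L) + \Phi(1)\, L_j$, where $L := \mu(T; C) - k$, $L_j := \mu(B_j; C) - k > 0$, and $L_\Psi := \Psi'(1)/\Psi(1)$.

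The main obstacle is to choose $(j, c)$ so that this expression is strictly positive. The cleanest scenario is when one can pick $(j, c)$ with $C'$ of degree $1$ in $B_j$: then $\Psi \equiv 1$ and $L_\Psi = 0$, so the condition reduces to $\Phi(1)\, L_j > L$. This can be secured by taking $j$ to index a ``peripheral'' branch -- e.g., a trivial one $B_j = Q_j$ when available, which forces $L_j = 1/2$ while making $\Phi(1)$ large thanks to other nontrivial branches (guaranteed by $n > k+2$). The most delicate case is when no such $(j, c)$ exists: here every adjacent $C'$ has degree $\ge 2$ in its branch, and one must compare the forward structure $\Psi$ with the sibling structure $\Phi$ more carefully, typically by averaging over the $k$ choices of $c \in V(C)$ and exploiting combinatorial asymmetries between the branches on the $c$-side and the $v_j$-side. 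I expect this degenerate case to be the main technical challenge of the proof.
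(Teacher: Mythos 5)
Your structural setup is sound: the branch decomposition at $C$, the leaf property $F(B_i;C)=1+F(B_i;Q_i)$, and the identity $F(T;C')-F(T;C)=\Psi-\Phi$ are all correct (at $x=1$, writing $A=N(B_j^{(0)};Q_j)$, one has $N(T;C)=\Phi+A\Psi\Phi$ and $N(T;C')=\Psi+A\Phi\Psi$), and the resulting sign criterion $\Psi(1)(L_\Psi-L)+\Phi(1)L_j>0$ is a clean reformulation. The problem is that you stop exactly where the proof begins. The case you label ``the most delicate'' --- where no adjacent clique has degree $1$ inside its branch, so $\Psi\not\equiv 1$ and $L_\Psi$ must genuinely be played off against $L$ --- is not a degenerate corner but the generic situation, and it is where essentially all of the paper's work lives: the paper refines the decomposition into $dk$ rooted sub-$k$-trees $T_{i,j}$, moves to the sub-clique $C_{1,1}$ minimizing $N_{i,j}$, and establishes the key inequality~\eqref{eq:main_ineq} by combining the two-sided bounds of Lemma~\ref{lem:mu_ifv_N} ($k+\tfrac12\log_2 N \le \mu \le \tfrac12(N+2k-1)$), the reverse AM--GM inequality of Lemma~\ref{lem:trivialinequality}, a two-variable analysis for the base case $d=2$, and an induction on $d$. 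Announcing that you ``expect this case to be the main technical challenge'' is accurate, but it means the proof has not been given.

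There is also a flaw in the architecture itself: the plan to exhibit, for every $C$ of degree $\ge 2$, an adjacent $C'$ with $\mu(T;C')>\mu(T;C)$ \emph{strictly} is not realized by the move you propose. Take $k=2$, $n=5$: the path-type $2$-tree with triangle $u_1u_2u_3$, $v$ joined to $\{u_2,u_3\}$, and $w$ joined to $\{u_3,v\}$. The clique $C=\{u_2,u_3\}$ has degree $2$ with one trivial branch, so your ``clean scenario'' selects $C'=\{u_1,u_3\}$; but $\mu(T;C')=\tfrac{14}{4}=\tfrac{7}{2}=\mu(T;C)$, a tie. (The strictly better neighbour is $\{u_2,v\}$, inside the nontrivial branch, with $\mu=\tfrac{18}{5}$.) Indeed your own reduction $\Phi(1)L_j>L$ becomes $\Phi(1)\ge 1+2L_\Phi$ with equality precisely when the other branch is path-type with $C$ simplicial in it, so weak inequality is the best your chosen move can give when $d=2$. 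To conclude that degree-$2$ cliques are never maximizers one therefore needs, as in the paper, a terminating chain of weakly improving moves ending at a degree-$1$ clique (with care that the chain never revisits $C$; the paper arranges this by always moving toward the minimal $N_{i,j}$), plus a separate treatment of the equality case, which forces $T$ to be path-type and is then dispatched by the explicit formula $\mu(T;C)=\frac{n+k}{2}-\frac{n-k}{2((a+1)(b+1)+1)}$ for degree-$1$ cliques. Your handling of $n=k+2$ is fine.
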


Lastly, Jamison \cite{jamison_average_1983} conjectured that for a given tree $T$ and any vertex $v$, the local mean order is at most twice the global mean order of all subtrees in $T$. Wagner and Wang \cite{WW16} proved that this is true. Answering \cite[Problem 2]{SO18} affirmatively, we show

\begin{restatable}
{theorem}{localboundedbyglobal}
\label{thr:prob2}
 The local mean order of the sub-$k$-trees containing a fixed $k$-clique $C$ is less than twice the global mean order of all sub-$k$-trees of $T$.
\end{restatable}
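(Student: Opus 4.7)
The plan is to reduce the statement to a $k$-clique of degree $1$ via Theorem~\ref{thr:prob4} and then prove the bound directly. By Theorem~\ref{thr:prob4}, the maximum of $\mu(T;C)$ over $k$-cliques of $T$ is attained at a $k$-clique of degree $1$ whenever $n \neq k+2$, so it suffices to prove $\mu(T;C) < 2\mu(T)$ in that case. For $n = k+2$, every $k$-clique satisfies $\mu(T;C) = k+1$ while $\mu(T) > (k+1)/2$ by a trivial computation (indeed $\mu(T) \geq k$), yielding the bound immediately.

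Fix then a $k$-clique $C = \{c_1, \ldots, c_k\}$ of degree $1$ with unique extension vertex $v$, and write $K_0 := C \cup \{v\}$. The key structural fact is that every sub-$k$-tree $X$ of $T$ containing $C$ with $|X| \geq k+1$ must also contain $v$. Indeed, $X$ admits a $k$-tree construction sequence beginning from the initial $k$-clique $C$, and the first vertex added is forced to be adjacent in $T$ to all of $C$; by the degree-$1$ hypothesis on $C$, this vertex is necessarily $v$. Hence the sub-$k$-trees of $T$ containing $C$ split cleanly into the singleton $\{C\}$ (of order $k$) and those containing all of $K_0$ (of order $\geq k+1$), and a similar construction-sequence analysis pins down the sub-$k$-trees containing $K_0$ in terms of the local structure of $T$ near $v$.

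The crux is then to compare these two families against the global count. I plan to associate to each sub-$k$-tree $X \supseteq K_0$ a family of sub-$k$-trees of $T$ that avoid $C$ by ``pushing through'' one of the $k$-cliques $C_i := (C \setminus \{c_i\}) \cup \{v\}$: intuitively, the part of $X$ that extends from $K_0$ via $C_i$ furnishes a sub-$k$-tree of $T$ containing $v$ but omitting $c_i$, and on average this captures at least half of $|X|$. Combined with the $1 + k(n-k)$ sub-$k$-trees of order $k$ (the $k$-cliques of $T$), only one of which contains $C$ but all of which contribute to $R(T)$, this should give the bound $\mu(T;C) < 2\mu(T)$. The main obstacle is formalizing this decomposition cleanly: sub-$k$-trees lack the tidy branching structure of ordinary trees, so the ``branches through $C_i$'' overlap through $v$, and a single sub-$k$-tree of $T$ avoiding $C$ may arise as a branch for many different $X \ni C$. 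Controlling this overcounting, likely via a carefully weighted injection together with Wagner and Wang's sharp form of the Jamison bound for trees applied to a suitable auxiliary tree structure on the $(k+1)$-cliques of $T$, is where the principal work of the proof lies.
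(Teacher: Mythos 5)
Your reduction to degree-$1$ cliques via Theorem~\ref{thr:prob4} is sound (and matches the paper's second case), as is the observation that every sub-$k$-tree containing $C$ of order greater than $k$ must contain the unique extension vertex $v$. But the quantitative core of the argument is absent: the entire comparison of $\mu(T;C)$ with $2\mu(T)$ is compressed into the unproved assertion that the branches pushed through the cliques $C_i$ capture ``on average at least half of $|X|$,'' and you yourself flag that controlling the overcounting in the proposed weighted injection is ``where the principal work of the proof lies.'' That work \emph{is} the theorem. Moreover, the scheme as described cannot succeed on its own: it only produces sub-$k$-trees avoiding $C$ that contain $v$, whereas the inequality $2\mu(T) > \mu(T;C)$ requires controlling the average order of \emph{all} sub-$k$-trees avoiding $C$ --- including the many small ones lying deep inside the branches and containing neither $C$ nor $v$. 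Those are exactly the trees that drag $\mu(T)$ down, they dominate $\overline{N}(T;C)$, and exhibiting some large avoided trees says nothing about them.

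For comparison, the paper proves the theorem by induction on $|T|$, splitting into the case where $C$ is simplicial and the case where $C$ has degree $1$ but is not simplicial. The small avoided trees in the branches are handled by applying the induction hypothesis to each branch $T_i$ at its root clique $C_i$, and both cases lean on the auxiliary inequality $R(T;C) > \overline{N}(T;C)$ (Lemma~\ref{lem:RgeqN}, the $k$-tree analogue of the key lemma in the tree case) together with the bounds of Lemma~\ref{lem:mu_ifv_N}. Even with all of that, the non-simplicial degree-$1$ case requires reducing a multivariable inequality to extremal values of the parameters $\mu_i^\bullet$ and $\overline{N}_i$ by linearity, and the cases $2 \le k \le 5$ are settled by exhaustive computer verification. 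The delicacy of that argument is strong evidence that a direct ``each branch contributes half'' injection will not close the gap; at a minimum you would need to prove an analogue of Lemma~\ref{lem:RgeqN} and supply a substitute for the inductive control of the branches.
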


\subsection{Related Results} A total of six questions were posed in \cite{SO18}. Problems 1 and 3 were solved by Luo and Xu~\cite{LX23}. Regarding the first problem, Jamison \cite{jamison_average_1983} showed that for any tree $T$ and any vertex $v$ of $T$, the local mean order of subtrees containing $v$ is an upper bound on the global mean order of subtrees of $T$. Stephens and Oellermann asked about a generalization to $k$-trees, to which Luo and Xu showed:

\begin{theorem}[\hspace{1sp}\cite{LX23}]
 For any $k$-tree $T$ of order $n$ with a $k$-clique $C$, we have $\mu(T; C) \geq\mu(T)$ with equality if and only if $T\cong K_k$.
\end{theorem}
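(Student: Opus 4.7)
The plan is induction on the order $n$ of $T$, exploiting the recursive structure of $k$-trees. The base case $T \cong K_k$ is immediate: the only sub-$k$-tree is $T$ itself, and for the unique $k$-clique $C = V(T)$ we obtain $\mu(T) = \mu(T; C) = k$. For the inductive step with $n > k$, I would choose a simplicial vertex $v$ of $T$, let $D = N_T(v)$, and set $T' = T - v$. A key structural observation is that any sub-$k$-tree of $T$ containing $v$ must also contain all of $D$, since the neighborhood of $v$ within such a sub-$k$-tree must form a $k$-clique inside $D$ and hence equal $D$. This yields a bijection between sub-$k$-trees of $T$ containing $v$ and sub-$k$-trees of $T'$ containing $D$, shifting size by exactly one, from which
\[
N(T) = N(T') + N(T'; D), \qquad R(T) = R(T') + R(T'; D) + N(T'; D),
\]
so $\mu(T)$ is a convex combination of $\mu(T')$ and $\mu(T'; D) + 1$ with weights proportional to $N(T')$ and $N(T'; D)$.

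If $v \in C$, then $C = \{v\} \cup C_0$ for some $(k-1)$-subset $C_0 \subset D$, and every sub-$k$-tree containing $C$ must contain $v$ and thus $D$; the bijection above yields $\mu(T; C) = \mu(T'; D) + 1$. Combining with the convex-combination expression for $\mu(T)$ and the inductive inequality $\mu(T'; D) \geq \mu(T')$ gives
\[
\mu(T; C) - \mu(T) = \frac{N(T')}{N(T') + N(T'; D)} \bigl(\mu(T'; D) + 1 - \mu(T')\bigr) > 0.
\]
If $v \notin C$, then $C$ is still a $k$-clique of $T'$, and a parallel decomposition writes $\mu(T; C)$ as a convex combination of $\mu(T'; C)$ and $\mu(T'; C \cup D) + 1$ with weights $N(T'; C)$ and $N(T'; C \cup D)$. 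Here the inductive bound $\mu(T'; C) \geq \mu(T')$ together with an auxiliary monotonicity $\mu(T'; C \cup D) \geq \mu(T'; D)$ reduce the target to a comparison of the two weight ratios, which is equivalent to the FKG-type correlation inequality
\[
N(T'; C \cup D) \cdot N(T') \;\geq\; N(T'; C) \cdot N(T'; D),
\]
expressing positive correlation of the events ``$X \supseteq C$'' and ``$X \supseteq D$'' for a uniformly random sub-$k$-tree $X$ of $T'$.

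The main obstacle is establishing this correlation inequality together with its relative-monotonicity consequence. I would address both by strengthening the induction hypothesis to a conditional FKG statement: for any $k$-tree $T$ and subsets $E, S_1, S_2 \subseteq V(T)$ admitting a common sub-$k$-tree container,
\[
N(T; E \cup S_1 \cup S_2) \cdot N(T; E) \;\geq\; N(T; E \cup S_1) \cdot N(T; E \cup S_2),
\]
where $N(T; S)$ counts sub-$k$-trees of $T$ containing $S$. Choosing $E = \emptyset$ recovers the unconditional correlation inequality, while rewriting $|X| - k$ as the sum of indicators $\mathbf{1}[K \subseteq X]$ over $(k+1)$-cliques $K$ reduces the monotonicity $\mu(T'; C \cup D) \geq \mu(T'; D)$ to a sum of such conditional correlation statements with $E = D$. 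The strengthened hypothesis is itself amenable to simplicial-vertex induction: each of the four counts splits into a $v$-avoiding and a $v$-containing part, and the resulting bilinear expansion can be handled term-by-term through the inductive hypothesis on $T'$, with a short case analysis on the membership of $v$ in $E$, $S_1$, and $S_2$. Tracking when the inequalities are tight shows that equality propagates all the way down the induction only when $T \cong K_k$, establishing uniqueness of the extremal case.
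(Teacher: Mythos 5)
First, a point of order: the paper does not prove this statement at all --- it is quoted from Luo and Xu \cite{LX23} --- so there is no in-paper proof to compare against, and your argument has to stand on its own. It does not: there are two genuine gaps, one reparable and one fatal.

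The reparable gap is in your counting. Your ``key structural observation'' fails for the smallest sub-$k$-trees: a sub-$k$-tree isomorphic to $K_k$ containing $v$ consists of $v$ together with \emph{any} $(k-1)$-subset of $D$, and inside it the neighborhood of $v$ is a $(k-1)$-clique, not a $k$-clique, so it need not contain $D$. Your bijection is valid only for sub-$k$-trees of order at least $k+1$; the correct identities are $N(T) = N(T') + N(T';D) + k$ and $R(T) = R(T') + R(T';D) + N(T';D) + k^2$, the extra terms coming from the $k$ copies of $K_k$ through $v$. Likewise, when $v \in C$ the sub-$k$-tree $X = C$ itself contains $C$ but not $D$, so $N(T;C) = N(T';D)+1$ and $\mu(T;C)$ is a weighted average of $\mu(T';D)+1$ and $k$, not equal to $\mu(T';D)+1$. (Compare the terms $+k$, $+k^2$ and $+1$ in the paper's own bookkeeping in Case 1 of the proof of Theorem~\ref{thr:prob2}, which uses exactly this leaf-deletion decomposition.) Consequently your displayed one-line formula for $\mu(T;C)-\mu(T)$ is not a valid identity; the case $v \in C$ can still be pushed through with more algebra, since the extra atoms sit at the minimum value $k$, but not as written.

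The fatal gap is the FKG-type inequality on which your entire $v \notin C$ case rests. It is false: containment events for a uniformly random sub-$k$-tree are generically \emph{negatively} correlated, because containing two sets forces the sub-$k$-tree to also contain connecting structure. Already for $k=1$ with $T'$ a single edge $uw$ one has $N(T')=3$, $N(T';u)=N(T';w)=2$, $N(T';\{u,w\})=1$, so
\[
N(T';\{u,w\})\cdot N(T') \;=\; 3 \;<\; 4 \;=\; N(T';u)\cdot N(T';w).
\]
The same example (take $E=\emptyset$, $S_1=\{u\}$, $S_2=\{w\}$) refutes your strengthened ``conditional FKG'' induction hypothesis at its very base, so both the weight-ratio comparison that was to close the $v\notin C$ case and your proposed derivation of $\mu(T';C\cup D)\ge \mu(T';D)$ collapse. (That monotonicity itself is plausible and is in the spirit of the inclusion monotonicity of \cite[Thm.~33]{LX23} that the paper invokes in Section~\ref{sec:prob6}, but it cannot be obtained by positive correlation, which goes the wrong way.) To repair the argument you would need a genuinely different mechanism for comparing the two convex combinations --- for instance direct quantitative bounds relating $R$, $N$ and $\overline{N}$ in the style of Lemma~\ref{lem:RgeqN} --- rather than a correlation inequality.
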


For the third problem, it was shown in \cite{jamison_average_1983} that paths have the smallest global mean subtree order. For $k$-trees we have:

\begin{theorem}[\hspace{1sp}\cite{LX23}]
For any $k$-tree $T$ of order $n$, we have $\mu(T)\ge \frac{\binom{n-k+2}{3}}{\binom{n-k+1}{2} + (n-k)k + 1 }+ k$ with equality if and only if $T$ is a path-type $k$-tree.
\end{theorem}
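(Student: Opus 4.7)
The plan is to verify the inequality in two stages: compute $\mu$ explicitly on the path-type $k$-tree, then use a local straightening surgery to show that no other $k$-tree of order $n$ achieves a smaller value.

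For the explicit computation, I would label the vertices of the path-type $k$-tree as $v_1,\ldots,v_n$, with initial $k$-clique $\{v_1,\ldots,v_k\}$ and each subsequent $v_{k+i}$ joined to $\{v_i,\ldots,v_{i+k-1}\}$. The $k$-cliques are of two kinds: the $n-k+1$ consecutive cliques $\{v_i,\ldots,v_{i+k-1}\}$, and the $(n-k)(k-1)$ further $k$-subsets of maximal $(k+1)$-cliques that contain both endpoints, giving $(n-k)k+1$ sub-$k$-trees of size $k$ in total. A sub-$k$-tree of size $k+j\ge k+1$ must be a contiguous segment $\{v_i,\ldots,v_{i+k+j-1}\}$: on any other vertex set the induced edge count falls strictly below the $\binom{k}{2}+jk$ edges required by a $k$-tree on $k+j$ vertices. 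This gives $n-k-j+1$ sub-$k$-trees of size $k+j$ for each $j\ge 1$. Summing yields $N=\binom{n-k+1}{2}+(n-k)k+1$, and a short telescoping calculation gives $R-kN=\binom{n-k+2}{3}$, confirming the stated value of $\mu$.

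For minimality, I would proceed by induction on $n$, combined with a straightening surgery. Suppose $T$ is of order $n$ and not path-type; then some $k$-clique $C$ has degree $d\ge 3$, and $T$ decomposes as $T_1\cup\cdots\cup T_d$, where each $T_i$ is the sub-$k$-tree containing $C$ associated with one of the $(k+1)$-cliques through $C$, with $T_i\cap T_j=C$ for $i\ne j$. Choose a degree-$1$ $k$-clique $C^*$ lying inside (say) $T_1$ as far from $C$ as possible, detach $T_d$ from $C$, and reattach it with its copy of $C$ identified at $C^*$. The key claim is that this strictly decreases $\mu(T)$; iterating produces a $k$-tree without branching $k$-cliques, which must be the path-type, giving both the bound and the uniqueness of the extremal structure.

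The main obstacle is establishing the surgery inequality. A useful tool is the multiplicative decomposition $N(T;C)=\prod_{i=1}^d N(T_i;C)$, from which differentiation of the sub-$k$-tree polynomial $f_T(x)=\sum_S x^{|S|-k}$ at $x=1$ yields the additive identity $\mu(T;C)-k=\sum_{i=1}^d\bigl(\mu(T_i;C)-k\bigr)$. This cleanly tracks the local mean at $C$. The difficult step is passing from the local mean to the global mean $\mu(T)$: the surgery affects sub-$k$-trees that avoid $C$ as well, and comparing $R(T')/N(T')$ to $R(T)/N(T)$ requires partitioning $S(T)$ by the branches each sub-$k$-tree meets and showing that the reconfiguration redistributes mass from mid-sized to extreme-sized sub-$k$-trees. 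The resulting combinatorics generalizes Jamison's classical linearization argument \cite{jamison_monotonicity_1984} but is substantially more intricate, because each sub-$k$-tree of a $k$-tree contains many $k$-cliques rather than a single root vertex.
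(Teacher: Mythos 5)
First, a point of comparison: the paper does not prove this statement at all --- it is quoted from Luo and Xu \cite{LX23} in the ``Related Results'' subsection --- so there is no internal proof to measure you against, and I can only assess your argument on its own terms. Your first stage is essentially correct: the count of $(n-k)k+1$ cliques of size $k$, the $n-k-j+1$ sub-$k$-trees of each size $k+j$ for $j\ge 1$, and the telescoping sum giving $R-kN=\binom{n-k+2}{3}$ all check out for the linear path-type $k$-tree you describe. Two caveats even here: for $k\ge 3$ and $n\ge k+3$ the path-type $k$-tree is \emph{not} unique (the paper points this out in Section 2), so the ``if'' direction of the equality statement requires showing that every path-type $k$-tree has the same size profile of sub-$k$-trees, and your ``contiguous segment'' argument is tied to one particular representative; the ``only if'' direction is not addressed at all.

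The second stage contains a genuine structural error in addition to an unproven key step. The claim that a non-path-type $k$-tree must contain a $k$-clique of degree at least $3$ is false: take a triangle $abc$ and attach one new vertex to each of its three edges. This $2$-tree has three simplicial vertices, hence is not path-type, yet every $2$-clique has degree at most $2$ --- this is exactly the graph in Figure~\ref{fig:oncentroids} of the paper. So your surgery has no starting point on such trees; the branching you need to eliminate is visible only at the level of the $1$-characteristic tree (or of vertices whose links branch), not at the level of $k$-cliques, and the induction scheme must be reformulated accordingly. Beyond that, the surgery inequality itself --- that detaching a branch and reattaching it at a distant degree-$1$ clique strictly decreases the global mean --- is the entire content of the theorem, and you explicitly leave it open. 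The identities $N(T;C)=\prod_i N(T_i;C)$ and $\mu^\bullet(T;C)=\sum_i \mu^\bullet(T_i;C)$ are fine but only control sub-$k$-trees through $C$; the passage to the global mean, which you correctly identify as the hard part, is missing. As it stands the proposal verifies the extremal value but establishes neither the lower bound nor the characterization of equality.
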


Very recently, Li, Ma, Dong, and Jin~\cite{LMDJ23} gave a partial proof of Theorem~\ref{thr:prob4}, showing that the maximum local mean
order always occurs at a $k$-clique of degree $1$ or $2$, thus  also solving~\cite[Problem 4]{SO18}.
They did this by combining the fact that the $1$-characteristic trees of adjacent $k$-cliques can be obtained from each other by a partial Kelmans operation \cite[Lem.~4.6]{LMDJ23}, an inequality between local orders of neighboring vertices after performing a partial Kelmans operation \cite[Thm.~3.3]{LMDJ23}, as well as \cite[Lem.~11]{SO18} and \cite[Thm.~3.2]{WW16}.
Theorem~\ref{thr:prob4} also solves Problem~5.4 from~\cite{LMDJ23}, asking whether the maximum local mean
order can ever occur at a $k$-clique of degree $2$, but
not at a $k$-clique of degree $1$.

{\bf Outline:} In Section~\ref{sec:notation}, we go over definitions and notation. Theorems~\ref{thr:prob6}, \ref{thr:prob4}, \ref{thr:prob2} are proven in Sections~\ref{sec:prob6}, \ref{sec:prob4}, \ref{sec:prob2} respectively.
We additionally address \cite[Problem 5]{SO18}, which is a more general question asking what one can say about the local mean order of sub-$k$-trees containing a fixed $r$-clique for $1 \leq r \leq k$. We give a possible direction and partial results in the concluding section.

\section{Notation and Definitions}\label{sec:notation}

The global mean sub-$k$-tree order $\mu(T)$ and the local mean sub-$k$-tree order $\mu(T;C)$ are defined in the introduction. 
The local mean order always counts the $k$ vertices from $C$ and it sometimes will be more convenient to work with the average number of additional vertices in a (uniform random) sub-$k$-tree of $T$ containing $C$, in which case we use the notation $\mu^\bullet(T; C)=\mu(T; C)-k.$ Moreover, the number of sub-$k$-trees not containing $C$ will be denoted by $\overline N(T; C) = N(T)-N(T; C) $ and the total number of vertices in the sub-$k$-trees that do not contain $C$ will be denoted by $\overline R(T; C) =R(T)-R(T; C)$.

A {\em $k$-leaf} or {\em simplicial vertex} is a vertex belonging to exactly one $(k+1)$-clique of $T$, i.e., a vertex of degree $k.$
A {\em simplicial $k$-clique} is a $k$-clique containing a $k$-leaf.
Note that a $k$-clique of degree $1$ is not necessarily simplicial. A {\em major $k$-clique} is a $k$-clique with degree at least $3$.
Two $k$-cliques are {\em adjacent} if they share a $(k-1)$-clique.

The {\em stem} of a $k$-tree $T$ is the $k$-tree obtained by deleting all $k$-leaves from $T$.

Subclasses of trees generalize to subclasses of $k$-trees. We will reference two in particular: paths generalize to {\em path-type $k$-trees} that are either isomorphic to $K_k$ or $K_{k+1}$ or have precisely two $k$-leaves. 
Note that for every $n \ge k+3$ and $k \geq 3$, there are multiple non-isomorphic path-type $k$-trees of order $n.$

A {\em $k$-star} is either $K_k$ or $K_{k+1}$, or it is the unique $k$-tree with $n-k$ simplicial vertices when $n\ge k+2.$

Furthermore, the combination of a $k$-star and a $k$-path is called a $k$-broom:
take a $k$-path of a certain length and add some simplicial vertices to a simplicial $k$-clique of the $k$-path.

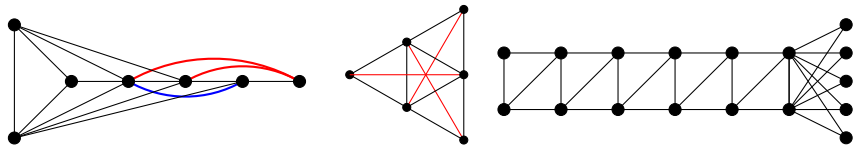
\begin{figure}[h]
    \centering
    \begin{tikzpicture}[scale=0.75]
        \draw (5,0) -- (1,0);
        \draw (0,1) -- (0,-1);
        \draw[thick,red] (5,0) arc (60:120:3cm);
        \draw[thick,red] (5,0) arc (60:120:2cm);
        \draw[thick,blue] (2,0) arc (240:300:2cm);
        
        \foreach \i in {1,2,3}
        {
            \draw (0,-1)--(\i,0) -- (0,1);
            \draw[fill=black] (\i,0) circle (3pt);
        }
         \draw (0,-1)--(4,0);

 \foreach \i/\j in {0/1,0/-1,5/0,4/0} {
        \draw[fill=black] (\i,\j) circle (3pt);
        }
    \end{tikzpicture}
    \quad 
    \begin{tikzpicture}[scale=0.5]

        \foreach \i in {120,240,0}
        {
        \draw (\i:1)--(\i+120:1);
        \draw (\i:1)--(\i+60:2);
        \draw (\i:1)--(\i-60:2);
        \draw[red] (\i:1)--(\i-180:2);
        \draw[fill=black] (\i:1) circle (3pt);
        \draw[fill=black] (\i+60:2) circle (3pt);
        }

    \end{tikzpicture}\quad 
    \begin{tikzpicture}[scale=0.75]
        \draw (0,0) -- (5,0)--(5,1)--(0,1)--(0,0);

        \foreach \i in {0,1,2,3,4}
        {
            \draw (\i,0)--(\i+1,1) -- (\i+1,0);
            \draw[fill=black] (\i,0) circle (3pt);
            \draw[fill=black] (\i,1) circle (3pt);
        }

        \foreach \y in {-0.5,0,0.5,1,1.5}
        {
            \draw (5,0)--(6,\y) -- (5,1);
            \draw[fill=black] (6,\y) circle (3pt);
        }

 \foreach \i/\j in {5/1,5/0} {
        \draw[fill=black] (\i,\j) circle (3pt);
        }
    
    \end{tikzpicture}
    \caption{Examples of a $3$-path, $3$-star and $2$-broom}
    \label{fig:2path_star_broom}
\end{figure}

For a given $k$-clique $C$ in $T$, it is often useful to decompose $T$ into $C$ and the sub-$k$-trees that result from deleting $C$.
Let $B_1,B_2,\ldots,B_d$ be the $(k+1)$-cliques that contain $C$, and let $v_i$ be the vertex of $B_i \setminus C$. Moreover, let $C_{i,1},\ldots,C_{i,k}$ be the $k$-subcliques of $B_i$ other than $C$. The $k$-tree $T$ can be \emph{decomposed} into $C$ and $k$-trees $T_{1,1},\ldots,T_{d,k}$, rooted at $C_{1,1},\ldots,C_{d,k}$ respectively, that are pairwise disjoint except for the vertices of the cliques $C_{i,j}$.

In a $1$-tree, any two vertices are connected by a unique path. 
This fact generalizes to $k$-trees through the construction of the 1-characteristic tree of a $k$-tree $T$~\cite{LX23,SO18}.
For a $k$-clique $C$ in $T$, a {\em perfect elimination ordering of $T$ to $C$} is an ordering $v_1, v_2, \ldots, v_{n-k}$ of its vertices different from $V(C)=\{c_1, \ldots, c_k\}$ such that each vertex $v_i$ is simplicial in the $k$-tree spanned by $C$ and $v_j, 1 \le j \le i$.
In~\cite{SO18} it is shown that for any $v \notin C$, there is a unique sequence of vertices that along with $C$ induce a path-type $k$-tree $P(C, v)$ and that form a perfect elimination ordering of $P(C, v)$ to $C$. It is also proven that $T$ can be written as $\bigcup P(C,v)$ where the union is taken over all $k$-leaves $v \in V$. Each $k$-tree $P(C,v)$ has an associated 1-tree $P'(C,v)$ where the vertices consist of a single vertex representing the entire clique $C$, along with the remaining non-$C$ vertices of $P(C,v)$. The edges are consecutive pairs from the perfect elimination ordering. Taking $\bigcup P'(C,v)$ over all $k$-leaves $v$ gives us what is called the {\em $1$-characteristic tree} of $T$, which we will denote $T'_C$. See Figure~\ref{fig:characteristic} for an example.

\begin{figure}[h]
    \centering
	\tikzstyle{every node}=[circle, draw, fill=black,
                        inner sep=0pt, scale=.6pt]
	\begin{tikzpicture}
   
	    	\node [label={[shift={(0,-1.25)}]\huge$c_1$}] (0) at (2,-.5)  {0};
 	   	\node [label={[shift={(0,0)}]\huge$c_2$}] (1) at (2,.75) {1};
	    	\node [label={[shift={(0,-1.25)}]\huge$v_1$}] (2) at (0,-.75) {2};
  		\node [label={[shift={(0,0)}]\huge$v_2$}] (3) at (0,.6) {3};
 		\node [label={[shift={(0,-1.25)}]\huge$v_3$}] (4) at (3.4,-.4) {4};
  		\node [label={[shift={(0,0)}]\huge$v_4$}] (5) at (3.5,.9) {5} ;
 		\node [label={[shift={(0,-1.25)}]\huge$v_5$}]  (6) at (5.25,-.75) {6} ;
 		\node [label={[shift={(0,0)}]\huge$v_6$}] (7) at (5.25,.75) {7} ;
  
    \path [line width=1.mm] (0) edge (1);
    \begin{scope}[
    every edge/.style={draw=black, thick}] 
   		\path [-] (1) edge (2);
   		\path [-] (0) edge (2);
   		\path [-] (0) edge (3);
   		\path [-] (1) edge (3);
    	\path [-] (4) edge (0);
   		\path [-] (4) edge (5);
  		\path [-] (4) edge (1);
  	 	\path [-] (5) edge (1);
	 	\path [-] (4) edge (7);
	 	\path [-] (4) edge (6);
	 	\path [-] (5) edge (6);
 		\path [-] (5) edge (7);
   	 \end{scope}
    \end{tikzpicture}\hspace{1cm}
     \begin{tikzpicture}
   
	    	\node [label={[shift={(0,0)}]\huge$C$}] (0) at (1.5,0)  {0};
	    	\node [label={[shift={(0,-1.25)}]\huge$v_1$}] (1) at (.25,-.4) {1};
  		\node [label={[shift={(0,0)}]\huge$v_2$}] (2) at (.25,.6) {2};
 		\node [label={[shift={(0,0)}]\huge$v_3$}] (3) at (3,-.075) {3};
  		\node [label={[shift={(0,-1.25)}]\huge$v_4$}] (4) at (4.7,.25) {4} ;
 		\node [label={[shift={(0,-1.25)}]\huge$v_5$}]  (5) at (6,-.75) {5} ;
 		\node [label={[shift={(0,0)}]\huge$v_6$}] (6) at (6,.6) {6} ;
  
    \begin{scope}[>={stealth[black]},
    every edge/.style={draw=black, very thick}] 
   		\path [-] (0) edge (2);
   		\path [-] (0) edge (3);
    	\path [-] (0) edge (1);
   		\path [-] (3) edge (4);
	 	\path [-] (4) edge (5);
 		\path [-] (4) edge (6);
   	 \end{scope}
    \end{tikzpicture}
     \caption{A $2$-tree (left) with $2$-clique $C = \{c_1,c_2\}$ and the $1$-characteristic tree (right).}
    \label{fig:characteristic}
\end{figure}
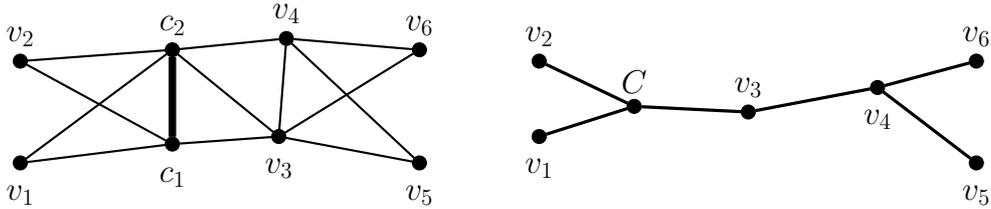

\section{Excluding $k$-cliques of degree $2$}\label{sec:prob6}

In this section, we prove 
\seriesreduced*

\subsection{The lower bound}

Among series-reduced trees of order $n$, for $4 \le n \le 8$ the star attains the maximum mean subtree order,
but for $n\ge 11$ it attains the minimum mean subtree order. 
It can be derived from~\cite[Lem.~12]{John14} and an adapted version of~\cite[Cor.~11]{John14} (with $2$ replaced by any $\eps>0$, at the cost of replacing $30$ by $n_{\eps}$) that this is indeed the case for $n$ sufficiently large.\footnote{We thank John Haslegrave for this remark}
In~\cite{C23}, it is shown that for $6 \le n \le 10$, the series-reduced trees attaining the minimum mean subtree order are those presented in Figure~\ref{fig:5nonstar_extremalgraphs}, and for $n \ge 11,$ $S_n$ is always the unique extremal graph.

\begin{figure}
    \centering

\begin{tikzpicture}[scale=0.5]
        \draw (4,1) -- (2,1);
        \foreach \i/\j in {1/0,1/2}
        {
 \draw (\i,\j) -- (2,1);
        }
        
        \foreach \i/\j in {5/2,5/0} {
 \draw (\i,\j) -- (4,1);
        }

 \foreach \i/\j in {1/0,1/2,5/2,5/0,4/1,2/1} {
        \draw[fill=black] (\i,\j) circle (3pt);
        }
    \end{tikzpicture}
    \quad 
    \begin{tikzpicture}[scale=0.5]
        \draw (4,1) -- (2,1);
        \foreach \i/\j in {1/0,1/2,1/1}
        {
 \draw (\i,\j) -- (2,1);
        }
        
        \foreach \i/\j in {5/2,5/0} {
 \draw (\i,\j) -- (4,1);
        }

 \foreach \i/\j in {1/0,1/2,5/2,5/0,4/1,2/1,1/1} {
        \draw[fill=black] (\i,\j) circle (3pt);
        }
    \end{tikzpicture}
\quad
\begin{tikzpicture}[scale=0.5]
        \draw (4,1) -- (2,1);
        \draw (3,1) -- (3,2);
        \foreach \i/\j in {1/0,1/2}
        {
 \draw (\i,\j) -- (2,1);
        }
        
        \foreach \i/\j in {5/2,5/0} {
 \draw (\i,\j) -- (4,1);
        }

 \foreach \i/\j in {1/0,1/2,5/2,5/0,4/1,2/1,3/1,3/2} {
        \draw[fill=black] (\i,\j) circle (3pt);
        }
    \end{tikzpicture}
    \quad 
    \begin{tikzpicture}[scale=0.5]
        \draw (4,1) -- (2,1);
        \foreach \i/\j in {1/0,1/2,1/1}
        {
 \draw (\i,\j) -- (2,1);
        }
        
        \foreach \i/\j in {5/2,5/0} {
 \draw (\i,\j) -- (4,1);
        }

         \draw (3,1) -- (3,2);

 \foreach \i/\j in {1/0,1/2,5/2,5/0,4/1,2/1,1/1,3/1,3/2} {
        \draw[fill=black] (\i,\j) circle (3pt);
        }
    \end{tikzpicture}\quad 
    \begin{tikzpicture}[scale=0.5]
        \draw (5,1) -- (2,1);
        \draw (3,1) -- (3,2);
        \draw (4,1) -- (4,2);
        \foreach \i/\j in {1/0,1/2}
        {
            \draw (\i,\j) -- (2,1);
        }
        \foreach \i/\j in {6/2,6/0}
        {
            \draw (\i,\j) -- (5,1);
        }
        \foreach \i/\j in {1/0,1/2,6/2,6/0,5/1,2/1,3/1,4/1,3/2,4/2}
        {
            \draw[fill=black] (\i,\j) circle (3pt);
        }
    \end{tikzpicture}

 \caption{Series-reduced trees with minimum mean subtree order for $6 \le n \le 10$}
    \label{fig:5nonstar_extremalgraphs}
\end{figure}
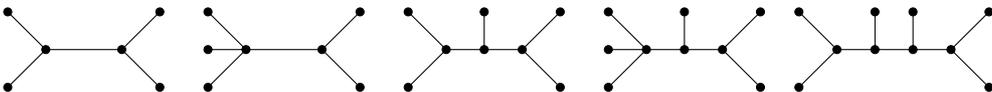

In this subsection, we will prove that the above extremal statement generalizes to $k$-trees without a $k$-clique of degree $2$.

First, we prove the following lemma, which states that every $k$-tree contains a $(k+1)$-clique $C$ that plays the role of a centroid in a tree (a vertex or edge whose removal splits the tree into components of size at most $\frac{n}{2}$). Figure~\ref{fig:oncentroids} is an example of a $2$-tree demonstrating why we must take a $(k+1)$-clique and not a $k$-clique.

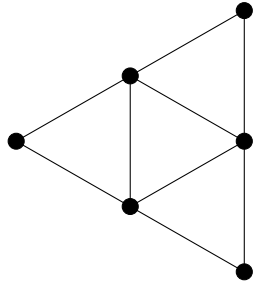
\begin{figure}[h]
    \centering
    \begin{tikzpicture}
        \foreach \i in {120,240,0}
        {
        \draw (\i:1)--(\i+120:1);
        \draw (\i:1)--(\i+60:2);
        \draw (\i:1)--(\i-60:2);
        \draw[fill=black] (\i:1) circle (3pt);
        \draw[fill=black] (\i+60:2) circle (3pt);
        }

    \end{tikzpicture}
    \caption{$2$-tree with a centroid $3$-clique but no centroid $2$-clique}
    \label{fig:oncentroids}
\end{figure}

\begin{lemma}\label{lem:centroid}
    Any $k$-tree $T$ (of order $n \ge k+1$) has a $(k+1)$-clique $C$ such that the order of all components of $T \setminus C$ is at most $\ceilfrac{n-(k+1)}{2}.$
\end{lemma}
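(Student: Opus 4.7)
The plan is to reduce the lemma to the classical centroid theorem for trees by passing to the clique tree $\mathcal{T}$ of $T$. The nodes of $\mathcal{T}$ are the $(k+1)$-cliques of $T$, two of which are joined in $\mathcal{T}$ when they share a common $k$-subclique; a straightforward induction on the recursive $k$-tree construction shows that (a suitable choice of) $\mathcal{T}$ is in fact a tree on $n - k$ nodes. This is just the clique tree of $T$ viewed as a chordal graph, and it satisfies the usual running intersection property: for any vertex $v$ of $T$, the set of $(k+1)$-cliques of $T$ containing $v$ induces a connected subtree of $\mathcal{T}$.

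The heart of the argument is a size-preserving correspondence between the components of $T - V(C)$ and those of $\mathcal{T} - C$: a component of $\mathcal{T} - C$ consisting of $s$ cliques should correspond to a component of $T - V(C)$ on exactly $s$ vertices. Fix a component $\mathcal{S}$ of $\mathcal{T} - C$ and let $B_1 \in \mathcal{S}$ be the unique clique of $\mathcal{S}$ that is adjacent to $C$ in $\mathcal{T}$; then $B_1 \cap V(C)$ is a $k$-subclique of $C$, so $B_1$ contributes exactly one vertex $v_1 \notin V(C)$. Working outward from $B_1$ along $\mathcal{S}$, each new clique $B$ introduces a single new vertex $v$, and the running intersection property rules out $v \in V(C)$: otherwise every clique on the $\mathcal{T}$-path from $C$ to $B$ would contain $v$, forcing $v \in B_1$ and contradicting that $v$ was first introduced at $B$. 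The same running intersection argument shows that distinct components of $\mathcal{T} - C$ contribute disjoint vertex sets with no edge of $T$ crossing between them (any such edge sits inside some $(k+1)$-clique, which would then have to lie in both components of $\mathcal{T}-C$), so the correspondence is a genuine bijection matching the number of cliques in a component of $\mathcal{T} - C$ with the order of the corresponding component of $T - V(C)$.

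Given this, the lemma follows by invoking the classical centroid theorem on $\mathcal{T}$: since $\mathcal{T}$ has $n - k$ nodes, there exists a node $C$ such that every component of $\mathcal{T} - C$ has at most $\lfloor (n-k)/2 \rfloor$ nodes. A quick parity check (in both cases of $n - k$) gives $\lfloor (n-k)/2 \rfloor = \lceil (n-(k+1))/2 \rceil$, so by the bijection every component of $T - V(C)$ has order at most $\lceil (n-(k+1))/2 \rceil$. The main obstacle I expect is making the size-preserving bijection watertight — specifically, ruling out that a clique far inside a component of $\mathcal{T} - C$ introduces a ``new'' vertex that secretly lies in $V(C)$ — which is exactly where the running intersection property is essential; once that geometric fact is in hand, the remainder is either standard (centroid theorem) or routine (parity check).
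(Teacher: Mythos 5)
Your argument is correct, but it follows a genuinely different route from the paper's. The paper gives a short extremal argument entirely inside $T$: take the $(k+1)$-clique $C$ minimizing the order of the largest component of $T\setminus C$, and if that order $n_0$ exceeds $\lceil (n-k-1)/2\rceil$, swap a vertex $u\in C$ for a vertex $v$ of the largest component to obtain a $(k+1)$-clique $C'$ whose removal leaves components of order at most $\max\{n-n_0-k,\,n_0-1\}<n_0$, contradicting minimality. You instead transfer the problem to the clique tree $\mathcal{T}$ and invoke the Jordan centroid theorem there; this is heavier machinery (one must fix a spanning tree of the ``share a $k$-subclique'' graph and verify the running intersection property for it, both of which are standard for $k$-trees and which you correctly flag), but it buys a cleaner conceptual picture and reuses a known result rather than redoing the shifting argument. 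One point in your write-up should be softened: the correspondence between components of $\mathcal{T}-C$ and components of $T-V(C)$ is \emph{not} a bijection in general. For instance, with $C=\{a,b,c\}$ and two further vertices $v_1,v_2$ each joined to $\{a,b\}$, a legitimate clique tree may attach $\{a,b,v_2\}$ below $\{a,b,v_1\}$, so that one component of $\mathcal{T}-C$ with two nodes covers $\{v_1,v_2\}$, which splits into two singleton components of $T-V(C)$. What your argument actually establishes --- and all that is needed --- is that the vertex sets covered by the components of $\mathcal{T}-C$ partition $V(T)\setminus V(C)$ with no edges of $T$ crossing between them, and that a component with $s$ nodes covers exactly $s$ vertices; hence every component of $T\setminus C$ is \emph{contained} in such a set and has order at most $\lfloor (n-k)/2\rfloor=\lceil (n-k-1)/2\rceil$. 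With that adjustment the proof is complete.
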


\begin{proof}
    Suppose for contradiction that such a $(k+1)$-clique does not exist. Consider the $(k+1)$-clique $C$ for which the largest component of $T \setminus C$ has minimum order over all choices of $C$. By assumption, the largest component of $T \setminus C$ has order $n_0 >\ceilfrac{n-(k+1)}{2}$, and by the construction of a $k$-tree, there exist vertices $u \in C$ and $v$ in the largest component of $T \setminus C$
    such that $ v \cup (C \setminus u)$ forms a $(k+1)$-clique $C'$.
    Now $T \setminus C'$ has components whose sizes are bounded by $\max\left\{ n-n_0-k, n_0-1 \right\}<n_0,$
    which gives a contradiction. 
    Hence $C$ satisfies the stated property.
\end{proof}

The following lemma and its proof are similar to~\cite[Lemma 5.1]{RW18}.

\begin{lemma}\label{lem:local_sim_global}
        For any $k$-tree $T$ without $k$-cliques of degree $2$, there is a $k$-clique $C'$ for which $\mu(T;C') -\mu(T)\le \frac{n^3}{2^{(n-k)/4}}=o_n(1).$
    \end{lemma}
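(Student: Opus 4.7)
The plan is to find a $k$-clique $C'$ such that the fraction of sub-$k$-trees of $T$ not containing $C'$ is exponentially small in $n-k$, which forces $\mu(T;C')$ to be close to $\mu(T)$. The starting point is the identity
\[
\mu(T;C')-\mu(T) \;=\; \frac{\overline{N}(T;C')}{N(T)}\bigl(\mu(T;C')-\overline{\mu}(T;C')\bigr) \;\le\; (n-k)\cdot\frac{\overline{N}(T;C')}{N(T)},
\]
where $\overline{\mu}(T;C')=\overline{R}(T;C')/\overline{N}(T;C')$ and the inequality uses $k\le|X|\le n$ for every sub-$k$-tree $X$. It thus suffices to exhibit $C'$ with $\overline{N}(T;C')/N(T)=O(n^2/2^{(n-k)/4})$.

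To locate such a $C'$, I would apply Lemma~\ref{lem:centroid} to find a $(k+1)$-clique $C$ with every component of $T\setminus V(C)$ of order at most $n_0=\bceil{(n-k-1)/2}$, and then look among the $k{+}1$ $k$-subcliques $C_0,\ldots,C_k$ of $C$. A sub-$k$-tree $X$ fails to contain $C_i$ iff $V(X)$ omits some vertex of $V(C)\setminus\{v_i\}$, so every sub-$k$-tree $X$ with $V(C)\not\subseteq V(X)$ appears in at most $k+1$ of the counts $\overline{N}(T;C_i)$. Averaging therefore produces an index $i^*$ with
\[
\overline{N}(T;C_{i^*}) \;\le\; N(T)-N(T;C),
\]
where $N(T;C)$ denotes the number of sub-$k$-trees whose vertex set contains $V(C)$. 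The task reduces to showing $(N(T)-N(T;C))/N(T)=O(n^2/2^{(n-k)/4})$.

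For this I would decompose $T$ at $C$: each component $A_j$ of $T\setminus V(C)$ attaches to $V(C)$ through a unique $k$-subclique $K_j\subseteq V(C)$, and $B_j=T[V(A_j)\cup V(K_j)]$ is itself a $k$-tree of order at most $n_0+k$ rooted at $K_j$. This gives the product formula $N(T;C)=\prod_j N(B_j;K_j)$, whereas sub-$k$-trees of $T$ not containing $V(C)$ admit a similar description in which, for at least one branch, the factor $N(B_j;K_j)$ is replaced by the (smaller) count of sub-$k$-trees of $B_j$ that miss some vertex of $K_j$. The no-degree-$2$ hypothesis enters here: because every $k$-clique of $T$ has degree $1$ or at least $3$, a counting argument on $k$-cliques forces each branch $B_j$ of nontrivial order to contain a linear number of simplicial vertices of $T$, each contributing an independent binary choice in $N(B_j;K_j)$. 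Together with the centroid property, which ensures the largest branch has $|B_j|-k$ of order at least $(n-k)/2$, this yields a relative saving of at least $2^{(n-k)/4}$ from the replaced factor, as required.

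The principal obstacle will be the lower bound $N(B_j;K_j)\ge 2^{\Omega(|B_j|-k)}$ for the dominant branch under the degree hypothesis: one must argue robustly that every sufficiently large branch carries enough simplicial vertices to generate an exponential number of sub-$k$-trees rooted at $K_j$. A secondary subtlety is the case of very unbalanced splits, where one branch contains almost all of $T$; this can be handled by iteratively relocating the centroid inside the dominant branch until a balanced split is obtained. The polynomial prefactor $n^3$ in the final bound absorbs the $(n-k)$ from the opening display, the $O(n)$ from the sum over branches, and the $O(n)$ from the sum over missing vertices in $V(C)$.
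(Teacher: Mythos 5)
Your opening identity $\mu(T;C')-\mu(T)=\frac{\overline{N}(T;C')}{N(T)}\bigl(\mu(T;C')-\overline{\mu}(T;C')\bigr)\le (n-k)\,\overline{N}(T;C')/N(T)$ is correct and is essentially the same reduction the paper uses: everything hinges on finding $C'$ with $\overline{N}(T;C')/N(T)$ exponentially small. The gap is in how you propose to find it. Your averaging step reduces the problem to showing that $\bigl(N(T)-N(T;C)\bigr)/N(T)=O(n^2/2^{(n-k)/4})$, where $N(T;C)$ counts sub-$k$-trees containing the \emph{entire} $(k+1)$-clique $C$. That intermediate claim is false. Take $T$ to be the $k$-star with $n>k+2$ (a legitimate instance: its base clique has degree $n-k\ge 3$ and all other $k$-cliques have degree $1$). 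Any $(k+1)$-clique $C$ is the base clique $C_0$ plus one simplicial vertex $v$, so $N(T;C)=2^{n-k-1}$ while $N(T)=2^{n-k}+(n-k)k$; hence $\bigl(N(T)-N(T;C)\bigr)/N(T)\to 1/2$. The averaging inequality $\min_i\overline{N}(T;C_i)\le N(T)-N(T;C)$ is true but vacuous here, because exponentially many sub-$k$-trees omit exactly one vertex of $C$ and are therefore charged to $k$ of the $k+1$ subcliques. In this example $\overline{N}(T;C_0)=(n-k)k$ is tiny but $\overline{N}(T;C_i)\approx 2^{n-k-1}$ for every other subclique, so the choice of which $k$-subclique to take cannot be made by averaging: it must be made structurally. (A secondary problem: your "similar description" of sub-$k$-trees not containing $V(C)$ as a product with one factor replaced is not a valid identity, since such a sub-$k$-tree need not contain any root clique $K_j$ at all.)

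The paper's proof makes exactly this structural choice. Starting from the centroid $(k+1)$-clique of Lemma~\ref{lem:centroid}, it selects the subclique $C'=C\setminus u_i$ for which the branches attached to $C'$ carry at least $(n-k)/2$ vertices (and handles the opposite, balanced case by inclusion monotonicity). It then bounds $\overline{N}(T;C')$ against $N(T;C')$ branch by branch via two explicit maps: restriction of a sub-$k$-tree containing $C'$ to a single branch $T_i$, whose fibers have size at least $2^{(n-k)/4-1}$ because the $1$-characteristic tree $T'_{C'}$ is series-reduced (this is where the no-degree-$2$ hypothesis enters) and hence $T$ has at least $\frac{n-k}{2}$ simplicial vertices, at least $\frac{n-k}{4}-1$ of them outside each $T_i$; and the minimal extension of a sub-$k$-tree of $T_i$ not containing $C'$ to one containing $C'$, whose fibers have size at most $n/2$ by the centroid property. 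Your final paragraph gestures at the first of these ingredients, but without the correct choice of $C'$ and without the minimal-extension map the argument does not close; as written, the proposal reduces the lemma to a false statement.
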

    \begin{proof}
        Take $C$ as in Lemma~\ref{lem:centroid}.
        Let its vertices be $\{u_1, u_2, \ldots, u_{k+1}\}.$
        
        For every component of $T \setminus C$, there is a unique vertex $u_i, 1\le i \le k+1$, such that the component together with $C \setminus u_i$ forms a $k$-tree.
        Now we consider two cases that are handled analogously.

        \textbf{Case 1:} There is some $u_i$ such that the union of components that form a $k$-tree when adding $C'=C \setminus u_i$ has order at least $\frac{n-k}{2}.$

        In this case, we consider the $r \le n-k$ components of $T\setminus C'$ and for every $1 \le i \le r,$ we let $T_i$ be the union of such a component and $C'.$
        
        We then apply the following claim:
         \begin{claim}
            Let $C'$ be a $k$-clique in
        a $k$-tree $T$ of order $n$ with no $k$-cliques of degree $2$.
        Then $T \setminus C'$ has at least 
             $\frac{n-k}2$ simplicial vertices.
         \end{claim}
         \begin{claimproof}
            Consider the $1$-characteristic tree $T'_{C'}$.
            For every $u \in V \setminus C'$, either all the $k$-cliques containing $u$ have degree $1$, in which case $u$ has degree $1$ in $T'_{C'}$, or (at least) one of them has degree at least $3$ and so does $u$ in $T'_{C'}$.
            Thus $T'_{C'}$ is series-reduced and thus has at least $\frac{n-k}2+1$ leaves. The latter implies that $T$ has at least $\frac{n-k}2$ simplicial vertices.
         \end{claimproof}

         From the claim, there are at least $\frac{n-k}{4}-1$ simplicial vertices not belonging to $T_i$ for every $1\le i \le r$. Observe that given any sub-$k$-tree of $T$ containing $C'$, we can map it to its sub-$k$-tree intersection with $T_i$. If two elements of $S(T; C')$ differ only in some subset of $k$-leaves of $\bigcup_{j \neq i} T_j$, then they map to the same element of $S(T_i; C')$. Thus, each element of $S(T_i; C')$ is mapped to at least $2^{\frac{n-k}{4}-1}$ times, and $N(T;C') \ge 2^{\frac{n-k}{4}-1} N(T_i; C')$.

         Using the perfect elimination ordering, every sub-$k$-tree $S$ in $T$ not containing $C$ can be extended in a minimal way into a sub-$k$-tree containing $C'$. 
         Furthermore, by considering the $1$-characteristic tree $T'_{C'}$ of $C'$, it is clear that there are no more than $\abs{T_i}-k \le \ceilfrac{n-(k+1)}{2} \le \frac n2$ $k$-trees that extend to the same tree. Here we have used Lemma~\ref{lem:centroid}. Thus, if we define a map from $\overline{S}(T_i; C')$ to $S(T_i; C')$ using the minimal extension, every element of $S(T_i; C')$ is mapped to at most $\frac{n}{2}$ times. 

         Putting the previous two observations together, we have that the number of sub-$k$-trees containing $C'$ is $N(T;C') \ge 2^{\frac{n-k}{4}-1} N(T_i; C') \ge \frac 1n 2^{\frac{n-k}4} \overline N(T_i; C').$
        
        By summing over all $i,$
        we obtain that 

    $$r N(T;C') \ge \frac 1n 2^{\frac{n-k}4} \sum_{i=1}^r \overline N(T_i; C') = \frac 1n 2^{\frac{n-k}4} \overline N(T;C').$$

    Since $r \le n,$ we have that $N(T;C') \ge \frac 1{n^2} 2^{\frac{n-k}4} \overline N(T;C').$ 
    This implies that $\frac{\mu(T)}{\mu(T; C')} \ge 1- \frac {n^2}{2^{\frac{n-k}4}}.$
    The result follows now from $\mu(T; C') \le n.$

        \textbf{Case 2:} For every $u_i$ the union of components in $T \setminus C$ that form a $k$-tree when adding $C \setminus u_i$ has order smaller than $\frac{n-k}{2}.$

        Let $T_i$, $1 \le i \le k+1$, be the $k$-trees obtained above when adding $C \setminus u_i$.
        By the claim above, for each $i$ there are at least $\frac{n-k}{4}-1$ $k$-leaves not belonging to $T_i$. Thus, the same computations apply and in particular we have that $N(T;C') \ge \frac 1{n^2} 2^{\frac{n-k}4} \overline N(T;C')$ as before.
        Now for $C'=C \setminus u_1,$ we conclude by inclusion monotonicity~\cite[Thm.~33]{LX23} that 
        $\mu(T;C') -\mu(T) \le \mu(T;C) -\mu(T)\le \frac{n^3}{2^{(n-k)/4}}.$
    \end{proof}

\begin{proof}[Proof of Theorem~\ref{thr:prob6}, lower bound]
    Take a $k$-clique $C'$ which satisfies Lemma~\ref{lem:local_sim_global}.
    Let $T'_{C'}$ be the $1$-characteristic tree of $T$ with respect to $C'.$
    By~\cite[Thm.~33]{LX23} and~\cite[Lem.~12]{John14}, we conclude that
    $\mu(T;C')=\mu(T'_{C'}; C')+k-1 \ge \frac{n+k}{2}+\frac{i -1}{10},$
    where $i$ is the number of internal vertices in $T'_{C'}.$
    By Lemma~\ref{lem:local_sim_global}, we conclude.
    
For sharpness, observe that if $T$ is not a $k$-star, we have $i \ge 2$ and the lower bound inequality is strict.
When $T$ is a $k$-star, it contains no $k$-cliques of degree $2$ provided that $n>k+2$.
As computed in~\cite{SO18} (page 64), $\mu(T)=\frac{R(T)}{N(T)} = \frac{(n+k)2^{n-k-1}}{2^{n-k}+(n-k)k} =\frac{n+k}{2}-o_n(1).$
\end{proof}

\subsection{The upper bound}

In this subsection, we generalize to $k$-trees the statement that a series-reduced tree has average subtree order at most $\frac{3n}{4}$ by giving a lower bound for the number of $k$-leaves and proving that $k$-leaves belong to at most half of the sub-$k$-trees.
This idea was also used in~\cite{John14}.
Note that the upper bound is slightly larger than $\frac{3n}{4}$ for larger $k$, which intuitively can be explained by the fact that the smallest sub-$k$-tree already has $k$ vertices, and more precisely the vertices in the base $k$-clique will all be major vertices.

\begin{proof}[Proof of Theorem~\ref{thr:prob6}, upper bound]
 Our upper bound will come from the observation that $\mu(T) = \sum_{v \in T} p(v)$ where $p(v)$ is the fraction of sub-$k$-trees containing $v$. We will specifically consider when $v$ is a $k$-leaf and bound the corresponding terms in the summation.
 
 We first prove that the $1$-characteristic tree $T'_{C}$ of a $k$-tree $T$ without $k$-cliques of degree $2$ is a series-reduced tree, for any $k$-clique $C$ of $T$. Indeed, given a $k$-clique $C$, there is either exactly one vertex adjacent to $C$ or at least $3.$ As such, the degree of $C$ in $T'_{C}$ is not $2.$
For any other vertex $v \in T$, either it is a leaf in $T'_{C}$, or at least one other vertex has been added to a $k$-clique $C'$ containing $v$ in $T$. In the latter case, since $T$ has no $k$-cliques of degree 2, there must be at least two vertices other than $v$ joined to $C'$ and thus the degree of $v$ in $T'_{C}$ is at least $3$. 
Since $T'_{C}$ has at least $\frac{n-k+1}{2}+1$ leaves, $T$ contains at least this many $k$-leaves (here one must also observe that if $C$ has degree $1$ in $T'_{C}$, some vertex of $C$ is simplicial in $T$).

 Now fix a $k$-leaf $v.$ Since $n \ge k+2,$ there is a vertex $u$ (different from $v$) such that $N(v)\cup \{u\}$ spans a $K_{k+1}$. 
 Define a function $f$ on sub-$k$-trees containing $v$ such that $f(C') = (C' \setminus \{v\}) \cup \{u\}$ for a $k$-clique $C'$ and $f(T') = T' \setminus v$ otherwise. We can check that $f$ maps $k$-cliques to $k$-cliques and is in fact an injection from sub-$k$-trees containing $v$ to sub-$k$-trees not containing $v$. Indeed, because $v$ is a $k$-leaf, $C' \cup \{u\}$ is not a $(k+1)$-clique and thus not a sub-$k$-tree. Hence there does not exist a $(k+1)$-clique $C''$ for which $C'' \setminus \{v\} = (C' \setminus \{v\}) \cup \{u\}$.

 This implies that every $k$-leaf belongs to at most half of the sub-$k$-trees in $T$.
 Remembering that there are at least $\frac{n-k+3}{2}$ $k$-leaves, the global mean order of $T$ is then 
 $$\mu(T) = \sum_{v\ \text{non-}k\text{-leaf}} p(v) + \sum_{v\ k\text{-leaf}}p(v) \leq n-\frac 12 \cdot \frac{n-k+3}{2} = \frac{3n+k-3}{4}$$

 For sharpness, let $n=2s+3-k$ for an integer $s.$ We construct $T$ by first constructing a caterpillar $T'$ which consists of a path-type $k$-tree $P_s^{k+1}$ on vertices $v_1, v_2, \ldots, v_s$, for which $k+1$ consecutive vertices form a clique,
 and adding a $k$-leaf connected to every $k$ consecutive vertices.
  To obtain $T$, we extend $T'$ by adding two $k$-leaves, which are connected to $\{v_1, \dots, v_k\}$ and $\{v_{s-k+1}, \dots, v_s\}$ respectively. Note that $T$ has a ``stem'' of $s$ vertices and the number of $k$-leaves is $\ell+2=(s-k+1)+2=s-k+3$.
   For every $1 \le i \le s-k$, there are $i$ sub-$k$-trees of the stem each of order $s+1-i$.
 Each of these can be extended by adding any subset of the $\ell+1-i$ neighboring $k$-leaves, or even one or two more if some of the end-vertices of the stem are involved.

Now we can compute that 
\begin{align*}
    N(T)&=(k(n-k)+1-\ell) + 2^{\ell +2} + 2\sum_{i=2}^{\ell} 2^i + \sum_{i=3}^{s-k+1} (i-2)\cdot 2^{\ell+1-i}\\
    &\sim 9\cdot 2^{\ell}.
\end{align*}
The first expression $k(n-k)+1-\ell$ counts the number of simplicial $k$-cliques different from the ones consisting of $k$ consecutive vertices in the stem. $2^{\ell+2}$ is the number of sub-$k$-trees containing the whole stem. The third term counts the number of sub-$k$-trees containing $v_1$ or $v_s$ but not both and at least $k$ vertices of the stem. The last summation counts the sub-$k$-trees containing at least $k$ vertices of the stem and none of $v_1$ and $v_s.$
We can also compute $R(T)$ by summing the total size of the respective sub-$k$-trees.

\begin{align*}
 R(T) &=(k(n-k)+1-\ell)k
  +2^{\ell+2}\left(s+\frac{\ell+2}{2}\right)
   +2\sum_{i=2}^{\ell} 2^i\left( k+i-2+\frac i2 \right)\\
 &+ \sum_{i=3}^{s-k+1} (i-2)\cdot 2^{\ell+1-i}\left( s+1-i +\frac{\ell+1-i}{2} \right)\\
 &\sim 2^{\ell+2}\left(s+\frac{\ell+2}{2}\right)+2^{\ell+2}(k-2) + 3\cdot 2^{\ell+1}(\ell-1) + 2^{\ell} \left( s+1 +\frac{\ell+1}{2} \right) -15\cdot 2^{\ell -1}\\
 &=\left( 5s+\frac{17}2 \ell +4k-16\right)2^{\ell}\\
 &=\left( \frac{27}{4}n +\frac{9}{4}k-\frac{111}{4}\right)2^{\ell}.
 \end{align*}

Finally, we conclude that $\mu(T)=\frac{R(T)}{N(T)}\sim \frac{3}{4}n +\frac{1}{4}k-\frac{37}{12},$ which is only $\frac{7}{3}$ away from the upper bound. These computations have also been verified in ~\url{https://github.com/StijnCambie/AvSubOrder_ktree/blob/main/M_comb_ktree.mw}.

\end{proof}

\begin{figure}[h]

\begin{minipage}[b]{.45\linewidth}
\begin{center}
 
\begin{tikzpicture}[thick]

\foreach \x in {0, 1,2}{
\draw[fill] (120*\x:1) circle (0.15);
}

\foreach \y in {0,1,2,3,4,5}
{
\draw[fill] (60*\y:2) circle (0.15);
}

\foreach \y in {0,1,2}
{
\draw[rotate=120*\y] (0:0.8) ellipse (2cm and 1.5cm);
}

\foreach \y in {0,1,2}
{
\draw[rotate=120*\y+60, blue] (0:0.8) ellipse (2cm and 1.5cm);
}

\end{tikzpicture}\\
\subcaption{A $k$-star}
\label{fig:doublebroom}
\end{center}
\end{minipage}
\quad\begin{minipage}[b]{.45\linewidth}
\begin{center}
 \begin{tikzpicture}

\foreach \y in {0,2}{
\draw[fill] (\y+2,1.5) circle (0.15);
\draw[red] (2+\y,0.6) ellipse (1.5cm and 1.25cm);
}

\foreach \y in {0,1,2,3}{
\foreach \x in {0, 1,2,3}{

\draw[fill] (\x+\y,0) circle (0.15);
}
\draw (1.5+\y,0) ellipse (2cm and 0.75cm);
}

 \end{tikzpicture}\\
\subcaption{A $k$-caterpillar}
\label{fig:widespider}
\end{center}
\end{minipage}

 \caption{Sketch of $k$-trees with mean sub-$k$-tree order roughly $\frac n2$ and $\frac{3n}{4}$ for $k=3$}
 \label{fig:star1/2+cat3/4}
\end{figure}
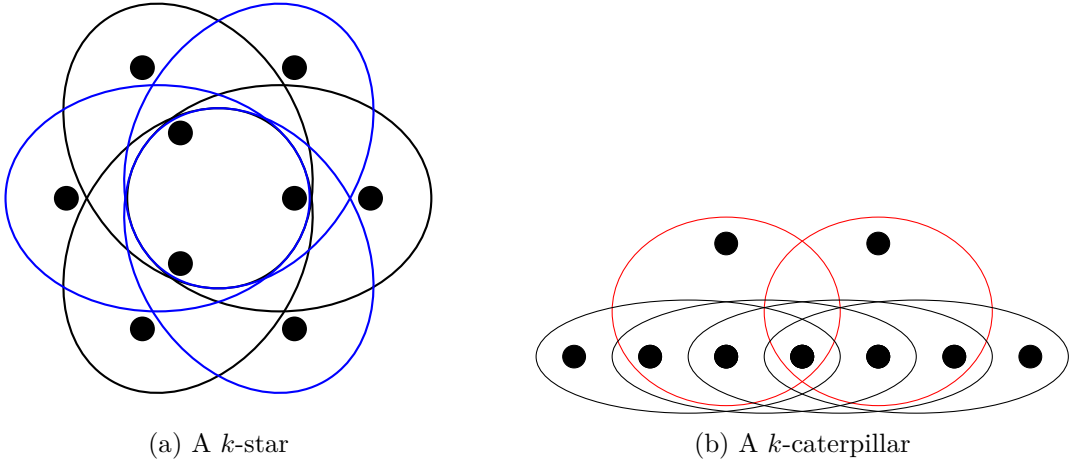

\section{The maximum local mean order}\label{sec:prob4}

In this section, we prove 
\maxlocalmean* 

We consider the $k$-clique in a $k$-tree $T$ for which the local mean order is greatest. Our aim is to show that its degree cannot be too large, specifically at most $2$. To do so, we prove that if a $k$-clique $C$ has degree at least $2$, then there is a neighboring $k$-clique whose local mean order is not smaller, with strict inequality when $C$ has degree at least $3$.  From this, it will follow that any $k$-clique attaining the maximum has degree at most $2$ and there is always at least one $k$-clique with degree $1$ in which the maximum is attained. 

The proof requires some technical inequalities, which we prove first. We start with a generalization of~\cite[Lemma 2.1]{WW16} to the $k$-tree case as follows.

\begin{lemma}\label{lem:bound_R}
For a sub-$k$-tree $T$ and a $k$-clique $C$, we have
$$R(T; C) \leq \frac{N(T; C)^2 + (2k-1)N(T; C)}{2}.$$
Equality holds if and only if $T$ is a path-type $k$-tree and $C$ is a simplicial $k$-clique. 
\end{lemma}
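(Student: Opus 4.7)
The plan is to reduce the desired inequality to its $k=1$ analogue, namely \cite[Lemma 2.1]{WW16}, by passing to the $1$-characteristic tree $T'_C$ of $T$ rooted at $C$. The construction of $T'_C$ provides a size-preserving bijection $X \leftrightarrow X'$ between sub-$k$-trees of $T$ containing $C$ and subtrees of $T'_C$ containing the root vertex representing $C$, under which a sub-$k$-tree $X$ with $|X|=k+j$ corresponds to a subtree $X'$ with $|X'|=1+j$. Consequently,
$$N(T;C) = N(T'_C;C) \quad\text{and}\quad R(T;C) = R(T'_C;C) + (k-1)\,N(T;C).$$

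I would then apply \cite[Lemma 2.1]{WW16} to the rooted tree $(T'_C, C)$, obtaining $R(T'_C;C) \le \frac{1}{2}\bigl(N(T'_C;C)^2 + N(T'_C;C)\bigr)$ with equality precisely when $T'_C$ is a path having $C$ as an endpoint. Plugging this into the identities above immediately gives
$$R(T;C) \le \frac{N(T;C)^2 + N(T;C)}{2} + (k-1)\,N(T;C) = \frac{N(T;C)^2 + (2k-1)\,N(T;C)}{2}.$$
For the equality characterisation, I would translate ``$T'_C$ is a path with $C$ as an endpoint'' back to a condition on $T$: the degree of $C$ in $T'_C$ equals the number of $(k+1)$-cliques containing $C$, so $C$ being an endpoint of $T'_C$ is equivalent to $C$ lying in a unique $(k+1)$-clique; combined with every other vertex of $T'_C$ having degree at most $2$, this forces the perfect elimination ordering from $C$ to extend a single $k$-clique at each step, which is precisely the recursive construction of a path-type $k$-tree with a $k$-leaf inside $V(C)$, i.e. with $C$ simplicial.

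The main obstacle is verifying the initial bijection rigorously. Although it is essentially built into the definition of $T'_C$ (via the uniqueness of the paths $P(C,v)$ from \cite{SO18}), one still has to check, using the perfect elimination ordering, that a subset $S$ of non-$C$ vertices together with $V(C)$ induces a sub-$k$-tree of $T$ precisely when $S \cup \{C\}$ spans a subtree of the rooted tree $T'_C$; this reduces to arguing that each non-$C$ vertex $v$ can only appear in a sub-$k$-tree containing $C$ if its unique predecessor along $P(C,v)$ is also present. Once this is in hand, the bound and the equality case follow by direct translation from the tree result.
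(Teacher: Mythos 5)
Your proof is correct, but it takes a genuinely different route from the paper's. The paper does not pass to the $1$-characteristic tree at all: it generalizes the argument of \cite[Lemma 2.1]{WW16} directly, observing that every sub-$k$-tree $T'\supseteq C$ with $\abs{T'}>k$ has a vertex outside $C$ that is simplicial in $T'$ (take a perfect elimination ordering of $T'$ to $C$), so deleting it yields a sub-$k$-tree of order $\abs{T'}-1$ still containing $C$; hence the sorted orders of the elements of $S(T;C)$ are at most $k, k+1, \dots, k+N(T;C)-1$, and summing gives the bound. Your reduction instead invokes the $k=1$ result as a black box via the size-shifting bijection between $S(T;C)$ and the root-containing subtrees of $T'_C$; this is legitimate, and the ``main obstacle'' you flag is already available as a citation rather than something you must reprove --- that bijection is exactly the content of \cite[Lem.~11]{SO18}, which this paper uses elsewhere in the equivalent form $N(T;C)=N(T'_C;C)$ and $\mu(T;C)=\mu(T'_C;C)+k-1$. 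What your route buys is that both the inequality and its equality case come for free from the tree case; what the paper's route buys is self-containedness and brevity, since the simplicial-vertex-removal step is a one-line generalization of the same idea. Two small points to tidy in your equality translation: degree $1$ of $C$ in $T'_C$ alone does not make $C$ simplicial (see Figure~\ref{fig:nonsimplicialmaximizer}), so to get from ``$T'_C$ is a path with $C$ as an endpoint'' to ``$C$ is simplicial'' you should note that a $k$-tree of order at least $k+2$ has at least two $k$-leaves and that every $k$-leaf outside $C$ is a leaf of $T'_C$, forcing one $k$-leaf to lie in $C$; and the degenerate orders $n\le k+1$ should be checked separately.
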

 
\begin{proof}
The statement is clearly true when $T = C$, so assume $\abs T > k$. 
Observe that every sub-$k$-tree $T'$ of $T$ containing $C$ has a vertex $v$ (not belonging to $C$) which is simplicial within $T'$ (consider a perfect elimination order where $C$ is taken as the base $k$-clique).
This implies that $T'\backslash v$ is a sub-$k$-tree containing $C$ as well. 

If there exists a sub-$k$-tree containing $C$ of order $\ell > k$, then the above implies that there must also exist a sub-$k$-tree of order $\ell - 1$ containing $C$. Thus, if we list the sub-$k$-trees in $S(T; C)$ from smallest to largest order, we see that
$$R(T; C) \leq k+(k+1)+\cdots + (k+N(T; C)-1) = \frac{1}{2}(N(T; C)^2+(2k-1)N(T; C))$$
as desired. 

The equality case is clear, since every sub-$k$-tree $T'\not=C$ of $T$ must have exactly one $k$-leaf not belonging to $C$, and equality is attained when $T$ is a path-type $k$-tree.
\end{proof}

Using the previous lemma, we can now bound the local mean order in terms of the number of sub-$k$-trees.

\begin{lemma}\label{lem:mu_ifv_N}
For a $k$-tree $T$ and one of its $k$-cliques $C$, we have
 $$ k+\frac{\log_2 {N(T; C)}}{2} \le \mu(T; C) \leq \frac{N(T; C) + (2k-1)}{2}.$$
 The minimum occurs if and only if $T$ is a $k$-star and $C$ its base $k$-clique. The maximum is attained exactly when $T$ is a path-type $k$-tree and $C$ is simplicial.
\end{lemma}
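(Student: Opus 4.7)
The upper bound is immediate from Lemma~\ref{lem:bound_R}: dividing the bound $R(T;C) \le (N(T;C)^2 + (2k-1)N(T;C))/2$ through by $N(T;C)$ gives $\mu(T;C) \le (N(T;C)+(2k-1))/2$, with the equality characterization (path-type $k$-tree, $C$ simplicial) inherited directly from that lemma.

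For the lower bound, my plan is to reduce to the case of ordinary $1$-trees via the $1$-characteristic tree $T'_C$. Sub-$k$-trees of $T$ containing $C$ are in bijection with subtrees of $T'_C$ containing the root vertex representing $C$, with sizes differing by exactly $k-1$. Hence $N(T;C) = N(T'_C;C)$ and $\mu(T;C) = \mu(T'_C;C) + (k-1)$, and the claimed inequality becomes the rooted-tree statement
\[
\mu(T';v) \ge 1 + \tfrac{1}{2}\log_2 N(T';v),
\]
with equality exactly when $T'$ is a star centered at $v$.

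I would prove this by induction on $|T'|$. Writing the children of $v$ as $u_1,\ldots,u_d$ with subtrees $T'_i$, and setting $N_i = N(T'_i;u_i)$ and $\mu_i = \mu(T'_i;u_i)$, the standard ``include-or-exclude at each child'' decomposition gives
\[
N(T';v) = \prod_{i=1}^{d}(1+N_i), \qquad \mu(T';v) = 1 + \sum_{i=1}^{d}\frac{N_i}{1+N_i}\,\mu_i.
\]
Substituting the inductive bound $\mu_i \ge 1 + \tfrac{1}{2}\log_2 N_i$ and using $\log_2 N(T';v) = \sum_i \log_2(1+N_i)$, the whole argument reduces termwise to the one-variable inequality
\[
g(N) := 2N + N\log_2 N - (1+N)\log_2(1+N) \ge 0 \quad \text{for all integers } N \ge 1.
\]
I expect this short calculus check to be the only real technical step: verify $g(1)=0$ and compute $g'(N) = 2 - \log_2(1+1/N) > 0$ for $N \ge 1$, so that $g$ is strictly increasing on $[1,\infty)$ and vanishes only at $N=1$.

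Strict monotonicity of $g$ then forces $N_i = 1$ for every $i$ in the equality case, i.e., every child of $v$ is a leaf, so inductively $T'$ must be a star centered at $v$. Translating back through the $1$-characteristic tree, this is precisely the condition that $T$ is a $k$-star and $C$ is its base $k$-clique.
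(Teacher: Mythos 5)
Your proof is correct, but for the lower bound it takes a genuinely different route from the paper. The paper gets the lower bound in two lines by combining two known facts: the $k$-tree analogue of Jamison's rooted bound, $\mu(T;C)\ge\frac{|T|+k}{2}$ (citing \cite[Thm.~12]{SO18}), with the trivial count $N(T;C)\le 2^{|T|-k}$; the equality analysis then hinges on when $N(T;C)=2^{|T|-k}$, which forces every vertex outside $C$ to form a $(k+1)$-clique with $C$, i.e.\ a $k$-star with base clique $C$. You instead transfer the problem to the $1$-characteristic tree $T'_C$ via the size-shifting bijection (this is \cite[Lem.~11]{SO18}, which the paper uses elsewhere, so it is fair game) and then run a self-contained induction on rooted trees, reducing everything to the one-variable inequality $g(N)=2N+N\log_2 N-(1+N)\log_2(1+N)\ge 0$; I checked the decomposition identities $N(T';v)=\prod_i(1+N_i)$ and $\mu(T';v)=1+\sum_i\frac{N_i}{1+N_i}\mu_i$, the termwise reduction, and the computation $g(1)=0$, $g'(N)=2-\log_2(1+1/N)>0$, and all are sound; strict monotonicity of $g$ gives a clean equality case ($N_i=1$ for all $i$, so $T'_C$ is a star rooted at $C$, i.e.\ $T$ is a $k$-star with base $C$). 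What your approach buys is independence from \cite[Thm.~12]{SO18} and an arguably tidier equality argument; what it costs is reliance on the characteristic-tree bijection and a bit more bookkeeping than the paper's two-line combination of known bounds. The upper bound is handled identically in both proofs.
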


\begin{proof}
 The upper bound follows immediately from Lemma~\ref{lem:bound_R} by dividing both sides of the inequality by $N(T; C)$. The equality cases are the same. 
 
 By~\cite[Thm.~12]{SO18} we have $\mu(T; C) \ge \frac{\abs T +k}{2}.$
 Since a sub-$k$-tree is determined by its vertices, we also have 
 \begin{equation}\label{eqn:nbound}
 N(T; C)\le 2^{\abs T-k}.
 \end{equation}
 Combining these two inequalities, we get $\mu(T; C) \ge \frac{\log_2 {N(T; C)}}{2}+k$.

 To attain the lower bound, equality must hold for \eqref{eqn:nbound}. This is the case if and only if $T$ is a $k$-star and $C$ its base $k$-clique. Indeed, for a $k$-star of order $n$ with base clique $C$, we have $N(T;C)=2^{n-k}$ and $\mu(T;C)=\frac{n+k}2$. 
 In the other direction, every vertex together with $C$ needs to form a $k$-tree and thus a $(k+1)$-clique, which is possible only for a $k$-star. 
\end{proof}

We will also make use of the following elementary inequality, which can be considered as the opposite statement of the inequality between the arithmetic mean and geometric mean (AM-GM).

\begin{lemma}\label{lem:trivialinequality}
 Let $x_1, x_2, \ldots, x_n \in \mathbb R_{\geq 1}$, and let $P = \prod_i x_i$.
 Then $\sum_i x_i \le P+(n-1).$
 Furthermore, equality is attained if and only if $x_i = P$ for some $i$ and $x_j = 1$ for all $j \neq i$.
\end{lemma}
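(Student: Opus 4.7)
The plan is to prove this by induction on $n$, using the factorization $(a-1)(b-1) \ge 0$ as the key algebraic identity that makes the product dominate the sum.

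The base case $n = 1$ is trivial since $x_1 \le x_1 + 0$. For the inductive step, set $P' = \prod_{i=1}^{n-1} x_i$ and apply the inductive hypothesis to obtain $\sum_{i=1}^{n-1} x_i \le P' + (n-2)$. Adding $x_n$ to both sides, it suffices to show
\[
P' + x_n - 1 \le P' x_n,
\]
which rearranges to $(P'-1)(x_n-1) \ge 0$. Both factors are nonnegative because $P' \ge 1$ (as a product of terms $\ge 1$) and $x_n \ge 1$, so the inequality holds.

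For the equality case, I would track when both inequalities used above become equalities. Equality in $(P'-1)(x_n-1) \ge 0$ forces either $x_n = 1$ or $P' = 1$. In the first subcase, equality in the inductive hypothesis forces exactly one $x_i$ (with $i < n$) to equal $P'$ and the others to equal $1$; since $x_n = 1$, that $x_i$ equals $P$. In the second subcase, $P' = 1$ forces $x_1 = \cdots = x_{n-1} = 1$, so $x_n = P$. Both subcases give the claimed characterization, and conversely any such configuration clearly attains equality.

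There is no real obstacle here — the statement is genuinely elementary and the one-line reduction to $(P'-1)(x_n-1) \ge 0$ is the whole content. The only thing to be a bit careful about is ensuring the equality analysis handles the degenerate case $P = 1$ (where all $x_i = 1$) correctly; the formulation ``one $x_i$ equals $P$ and the rest equal $1$'' remains consistent in this case since then $P = 1$ and any choice of $i$ works.
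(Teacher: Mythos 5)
Your proof is correct and rests on the same algebraic kernel as the paper's, namely the factorization $(a-1)(b-1)\ge 0$; the paper phrases it as repeatedly merging pairs $x_i,x_j>1$ into $x_ix_j,1$, while you organize it as induction on $n$, and your equality analysis is if anything more carefully spelled out than the paper's. No changes needed.
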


\begin{proof}
 This can be proven in multiple ways.
 The most elementary way is to observe that if $x_i,x_j>1$, then $x_ix_j + 1>x_i+x_j$ since $(x_i-1)(x_j-1)>0.$
 Repeating this with pairs of elements which are strictly larger than $1$ gives the result.
 Alternatively, one could consider the variables $\alpha_i= \log(x_i) \ge 0$. Since their sum is the fixed constant $\log P$ and the exponential function is convex, as a corollary of Karamata's inequality $\sum_i \exp(\alpha_i)$ is maximized when all except one are equal to $0.$
\end{proof}

We are now ready to prove Theorem~\ref{thr:prob4}.
Recall that $T$ can be decomposed into $C$ and $k$-trees $T_{1,1},\ldots,T_{d,k}$ rooted at $C_{1,1},\ldots,C_{d,k}$ respectively that are pairwise disjoint except for the vertices of the cliques $C_{i,j}$. Let $B_i$ denote the $(k+1)$-clique that contains $C$ as well as $C_{i,1},\ldots,C_{i,k}$, and let $v_i$ be the vertex of $B_i \setminus C$. Finally, set $N_{i,j} = N(T_{i,j};C_{i,j})$ and $\mu^\bullet_{i,j} = \mu^\bullet(T_{i,j};C_{i,j})$.

\begin{proof}[Proof of Theorem~\ref{thr:prob4}]
We first observe that
\begin{equation}\label{rec:number-of-sub-$k$-trees}
N(T; C) = \prod_{i=1}^d \left( 1 + \prod_{j=1}^k N_{i,j} \right).
\end{equation}
This is because a sub-$k$-tree $S$ of $T$ containing $C$ is specified as follows: given $1 \leq i \leq d$, choose the sub-$k$-tree intersection of $S$ with $T_{i,1}, \dots, T_{i,k}$. There are $\prod_{j=1}^k N_{i,j}$ ways to do this if $v_i \in S$ and one if $v_i \notin S$. We do this independently for each $i$, resulting in the product above.

Next, we would like to express the local mean order at $C$ in terms of the quantities $N_{i,j}$ and $\mu^\bullet_{i,j}$: it is given by
\begin{equation}\label{eq:mu_bullet(TC)}
\mu^\bullet(T; C) = \sum_{i=1}^d \frac{1 + \sum_{j=1}^k \mu^\bullet_{i,j}}{1+\prod_{j=1}^k N_{i,j}^{-1}} .\end{equation}
To see this, note that we can interpret $\mu(T; C)$ as the expected size of a random sub-$k$-tree chosen from $S(T;C)$, which can then be written as the sum of the expected sizes of the intersection with each component $T_{i,j}$ in the decomposition. We have that $\frac{1}{1+\prod_{j=1}^k N_{i,j}^{-1}} = \frac{\prod_{j=1}^k N_{i,j}}{1+\prod_{j=1}^k N_{i,j}}$ is the probability that a randomly chosen sub-$k$-tree of $T$ that contains $C$ also contains $v_i$ (by the same reasoning that gave us~\eqref{rec:number-of-sub-$k$-trees}). Once $v_i$ is included, it adds $1$ to the number of vertices, and an average total of $\sum_{j=1}^k \mu^\bullet_{i,j}$ is added from the extensions in $T_{i,1},\ldots,T_{i,k}$. 

Without loss of generality, we can assume that $N_{1,1} = \min_{i,j} N_{i,j}$. We want to compare $\mu^\bullet(T; C)$ to $\mu^\bullet(T; C_{1,1})$ and prove that $\mu^\bullet(T; C_{1,1}) \geq \mu^\bullet(T; C)$ provided that $d \geq 2$, with strict inequality if $d > 2$. Observe that this will be enough to prove our claim: no clique with degree greater than 2 can attain the maximum local mean order, and starting from any clique, we may repeatedly apply the inequality above to obtain a sequence of neighboring cliques whose local mean orders are weakly increasing and the last of which is a degree-1 $k$-clique (more precisely, for the first step replacing $C$ with $C':= C_{1,1}$ and considering the decomposition $\{C'_{i,j}\}$ and $\{T'_{i,j}\}$ with respect to $C'$, the clique adjacent to $C'$ with equal or larger $\mu^\bullet$ cannot be the original clique $C$ as $C$ will not correspond to $\min_{i,j} N'_{i,j}$. So in repeatedly applying the inequality, we will obtain a sequence of distinct cliques which must terminate but can only terminate once we have reached a clique of degree 1.)

Let us first express $\mu^\bullet(T; C_{1,1})$ in terms of the $N_{i,j}$ and $\mu^\bullet_{i,j}$ as well. First, we have
$$N(T; C_{1,1}) = N_{1,1} + \prod_{j=1}^k N_{1,j} \prod_{i=2}^d \left( 1 + \prod_{j=1}^k N_{i,j} \right).$$
The reasoning is similar to~\eqref{rec:number-of-sub-$k$-trees}: there are $N_{1,1}$ sub-$k$-trees that contain $C_{1,1}$, but not the full $(k+1)$-clique $B_1$, and the remaining product counts sub-$k$-trees containing $B_1$. We also have
$$\mu^\bullet(T; C_{1,1}) = \frac{N_{1,1}\mu^\bullet_{1,1}}{N(T; C_{1,1})} + \left(1 - \frac{N_{1,1}}{N(T; C_{1,1})} \right) \left( 1 + \sum_{j=1}^k \mu^\bullet_{1,j} + \sum_{i=2}^d \frac{1 + \sum_{j=1}^k \mu^\bullet_{i,j}}{1+\prod_{j=1}^k N_{i,j}^{-1}} \right),$$
using the fact that $\frac{N_{1,1}}{N(T; C_{1,1})}$ is the probability that a random sub-$k$-tree containing $C_{1,1}$ does \emph{not} contain $B_1$. Let us now take the difference $\mu^\bullet(T; C_{1,1}) - \mu^\bullet(T; C)$: we have, after some manipulations,
\begin{align*}
 \mu^\bullet(T; C_{1,1}) - \mu^\bullet(T; C) &= \frac{N_{1,1}}{N(T; C_{1,1})} \left(\mu^\bullet_{1,1} - \mu^\bullet(T; C) \right) \\
 &\quad + \left(1 - \frac{N_{1,1}}{N(T; C_{1,1})} \right) \left(1 + \sum_{j=1}^k \mu^\bullet_{1,j} - \frac{1 + \sum_{j=1}^k \mu^\bullet_{1,j}}{1+\prod_{j=1}^k N_{i,j}^{-1}}\right)\\
 &= \frac{N_{1,1}}{N(T; C_{1,1})} \left(\mu^\bullet_{1,1} - \mu^\bullet(T; C) \right) + \left(1 - \frac{N_{1,1}}{N(T; C_{1,1})} \right) \left(\frac{1 + \sum_{j=1}^k \mu^\bullet_{1,j}}{1+\prod_{j=1}^k N_{1,j}}\right).
\end{align*}

Now let $F = F(k):=\frac{1 + \sum_{j=1}^k \mu^\bullet_{1,j}}{1+\prod_{j=1}^k N_{1,j}}$. We want to show that $\mu^\bullet(T; C_{1,1}) - \mu^\bullet(T; C) \geq 0$, i.e., that
$$\frac{N_{1,1}}{N(T; C_{1,1})}\mu^\bullet_{1,1} + F \geq \frac{N_{1,1}}{N(T; C_{1,1})} \left(\mu^\bullet(T; C) + F \right).$$
Equivalently,
\begin{equation}\label{eq:ineq}
    \mu^\bullet_{1,1} + \frac{N(T; C_{1,1})}{N_{1,1}} \cdot F \geq \mu^\bullet(T; C) + F.
\end{equation}
Using the previous computations, we know that
$$\frac{N(T; C_{1,1})}{N_{1,1}} = 1 + \prod_{j=2}^k N_{1,j}\prod_{i=2}^d \left(1+\prod_{j=1}^k N_{i,j}\right).$$
So the left-hand side of \eqref{eq:ineq} is equal to
$$\mu^\bullet_{1,1} + F + \frac{\prod_{j=2}^kN_{1,j}}{1+\prod_{j=1}^k N_{1,j}} \left(1+\sum_{j=1}^k \mu^\bullet_{1,j}\right) \prod_{i=2}^d \left(1+\prod_{j=1}^k N_{i,j}\right).$$
We can subtract $F$ from both sides and use \eqref{eq:mu_bullet(TC)} to replace $\mu^\bullet(T; C)$. Taking into account that $\mu^\bullet_{1,1} \ge 0$, it is sufficient to prove
\begin{equation}\label{eq:main_ineq} \frac{\prod_{j=2}^kN_{1,j}}{1+\prod_{j=1}^k N_{1,j}}\left(1+\sum_{j=1}^k \mu^\bullet_{1,j}\right) \prod_{i=2}^d \left(1+\prod_{j=1}^k N_{i,j}\right) \geq \sum_{i=1}^d \frac{1+\sum_{j=1}^k \mu^\bullet_{i,j}}{1+\prod_{j=1}^k N_{i,j}^{-1}}.
\end{equation}

We first prove \eqref{eq:main_ineq} for $d=2$,
in which case it can be rewritten as 

$$\frac{\prod_{j=1}^k N_{1,j}}{N_{1,1}(1+\prod_{j=1}^k N_{1,j})}\left(1+\sum_{j=1}^k \mu^\bullet_{1,j}\right) \left(1-N_{1,1}+\prod_{j=1}^k N_{2,j}\right) \geq \frac{\prod_{j=1}^k N_{2,j}}{1+\prod_{j=1}^k N_{2,j}} \left(1+\sum_{j=1}^k \mu^\bullet_{2,j}\right).$$

Write $\prod_{j=1}^k N_{1,j}= N_{1,1}^{k-1} y$ and $\prod_{j=1}^k N_{2,j}= N_{1,1}^{k-1} z$ where $y, z \ge N_{1,1}.$
By Lemma~\ref{lem:mu_ifv_N}, we have $\mu^\bullet_{1,j} \geq \frac12 \log_2 N_{1,j}$ and $\mu^\bullet_{2,j} \leq \frac12 (N_{2,j} - 1)$. Applying Lemma~\ref{lem:trivialinequality} to the numbers $\frac{N_{2,j}}{N_{1,1}}$, $1 \leq j \leq k$, gives us that $\sum_{j=1}^k N_{2,j} \leq (k-1)N_{1,1} + z$. Hence we find that it suffices to prove that

$$\frac{y\left(1-N_{1,1}+N_{1,1}^{k-1}z\right)}{N_{1,1}(1+ N_{1,1}^{k-1}y)}\left(1+\frac{(k-1) \log_2{ N_{1,1}} + \log_2{y}}2 \right) \geq \frac{z}{1+N_{1,1}^{k-1}z} \left(1+\frac{ (k-1)N_{1,1} +z-k}{2} \right).$$

We note that the left-hand side is strictly increasing in $y$ and the right-hand side is independent of $y,$ which implies that it is sufficient to prove the equality when $y=N_{1,1}.$ That is, we want to prove that for every $z \ge y \ge 1$

\begin{equation}\label{eq:d=2_ifv_yz}
    \frac{1-y+y^{k-1}z}{1+y^k}\left( 1+ \frac k2 \log_2 y \right) \ge \frac{z}{1+y^{k-1}z}\left(1+\frac{ (k-1)y +z-k}{2} \right).
\end{equation}

If $y= N_{1,1} = 1$, \eqref{eq:d=2_ifv_yz} reduces to the equality $\frac z2 = \frac z2.$

If $y= N_{1,1} \ge 2$, $z\ge y$ and $k \ge 2$ imply that $2+(k-1)y+z-k \le kz$. Together with $\log_2 {y} \ge 1$, we conclude that it is sufficient to prove that

$$\frac{1-y+y^{k-1}z}{1+y^k} \frac {k+2}2  \ge \frac{z}{1+y^{k-1}z}\frac{ kz}{2}$$
which is equivalent to
$$ (1-y+y^{k-1}z)(k+2)(1+y^{k-1}z) \ge (1+y^k)kz^2.$$

For $k=2$, the difference between the two sides is an increasing function in $y$, and for $y=2$ it reduces to $3z^2-2 \ge 0$, which holds.

For $k \ge 3$, the inequality is immediate, using that 
$y^{2(k-1)}z^2 \ge (1.5y^k+1)z^2 \ge y^kz + y^kz^2+z^2$ (remember that $z \ge y \ge 2$).

Once \eqref{eq:main_ineq} has been verified for $d=2$, we can apply induction to prove it for $d \geq 3$. Let $C=\frac{\prod_{j=2}^kN_{1,j}}{1+\prod_{j=1}^k N_{1,j}}\left(1+\sum_{j=1}^k \mu^\bullet_{1,j}\right),$ $g_i= 1+\prod_{j=1}^k N_{i,j}$ and $f_i=\frac{1+\sum_{j=1}^k \mu^\bullet_{i,j}}{1+\prod_{j=1}^k N_{i,j}^{-1}}.$ We can then rewrite \eqref{eq:main_ineq} as

$$C \prod_{i=2}^d g_i \ge \sum_{i=1}^d f_i$$

By the induction hypothesis, we have
$$\frac{C}{g_m} \prod_{i=2}^d g_i \ge \left(\sum_{i=1}^d f_i\right)- f_m$$ for every $2 \le m \le d.$
Summing over $m$, we obtain that 
$$C \prod_{i=2}^d g_i \left( \sum_{m=2}^d \frac 1{g_m} \right) \ge f_1+(d-2)\sum_{i=1}^d f_i.$$
Since we have $g_m \ge 2$ for every $m$, the conclusion now follows as $$\sum_{m=2}^d \frac 1{g_m} \le \frac{(d-1)}{2} \le d-2,$$
and since $f_1>0$, inequality~\eqref{eq:main_ineq} is even strict in the case that $d > 2$. 

We conclude that if $C$ has degree $d \ge 2$, then
$\mu^\bullet(T; C) \le \mu^\bullet(T; C_{1,1}).$
Equality can only be attained when $d=2$. Looking back over the proof of~\eqref{eq:main_ineq}, we also see that for equality to hold, all $N_{i,j}$ except for $N_{2,2}$ (up to renaming) must be equal to $1$, and due to Lemma~\ref{lem:mu_ifv_N}, $T_{2,2}$ has to be a path-type $k$-tree.

For $n\ge k+3$, we compute that in a path-type $k$-tree $T$ the maximum among the degree-$1$ $k$-cliques is attained by a central one, which implies that no degree-$2$ $k$-cliques can be extremal.
Let $B_1$ be the unique $(k+1)$-clique containing $C$, and assume that $T \setminus B_1$ contains components of size $a$ and $b.$ Thus $a+b=n-(k+1).$
Then $\mu(T; C)= \frac{k+(a+1)(b+1)\frac{n+k}{2}}{(a+1)(b+1)+1}=\frac{n+k}{2}-\frac{n-k}{2((a+1)(b+1)+1)}$, and this is maximized if and only if $\abs{a-b}\le 1.$
The latter can also be derived from considering the $1$-characteristic tree.
This is illustrated in Figure~\ref{fig:nonsimplicialmaximizer} below.
We emphasize that the maximizing $k$-clique of degree $1$ is not simplicial.

When $n \le k+2,$ every $k$-clique has the same local mean sub-$k$-tree order, and so the (unique) degree-$2$ $k$-clique when $n=k+2$ is the only case where equality can occur at a $k$-clique with degree $2.$
This concludes the proof of Theorem~\ref{thr:prob4}.
\end{proof}

\begin{figure}[h]
    \centering
    \begin{tikzpicture}
        \draw (1,1.73205)--(0,0)--(4,0)--(3,1.73205);
        \draw (1,1.73205)--(-1,1.73205)--(0,0);
        \draw (3,1.73205)--(5,1.73205)--(4,0);
        \draw[blue] (1,1.73205)--(3,1.73205);
        \draw (3,1.73205)--(2,0)--(1,1.73205);
        \foreach \i in {0,2,4}
        {
        \draw[fill=black] (\i,0) circle (3pt);
        }
        \foreach \i in {-1,5,1,3}
        {
        \draw[fill=black] (\i,1.73205) circle (3pt);
        }
        \node [label= $C$] (0) at (2,1.73) {};
    \end{tikzpicture} 
    \quad
    \begin{tikzpicture}
        \draw[fill=black] (2,1) circle (3pt);
        \draw (2,1)--(2,0);
        \draw (0,0)--(4,0);
       \foreach \i in {0,1,2,3,4}
        {
        \draw[fill=black] (\i,0) circle (3pt);
        }
        \node [label= $C$] (0) at (2,1)  {0};
    \end{tikzpicture}
    \caption{$2$-path for which the maximum local mean is attained in a non-simplicial clique $C$, with its $1$-characteristic tree $T'_C$}
    \label{fig:nonsimplicialmaximizer}
\end{figure}
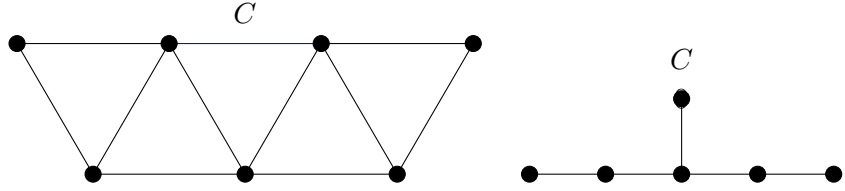

\section{Bounding local mean order by global mean order}\label{sec:prob2}

In this section, we prove
\localboundedbyglobal*

As before, given a $k$-tree $T$ and a $k$-clique $C$ in $T$, we utilize the decomposition of $T$ into $C$ and $k$-trees $T_{1,1},\ldots,  T_{d,k}$ rooted at $C_{1,1},\ldots, C_{d,k}$. Set $N_{i,j}=N(T_{i,j};C_{i,j})$, $\overline{N}_{i,j}=\overline{N}(T_{i,j};C_{i,j})$,
$\mu_{i,j}=\mu(T_{i,j};C_{i,j})$, $\overline{\mu}_{i,j}=\overline{\mu}(T_{i,j};C_{i,j})$, $\mu^\bullet_{i,j}=\mu^\bullet(T_{i,j};C_{i,j})$ and $R_{i,j}=R(T_{i,j};C_{i,j})$, $\overline{R}_{i,j}=\overline{R}(T_{i,j};C_{i,j})$. 

Since a sub-$k$-tree not containing $C$ needs to be a sub-$k$-tree of some $k$-tree $T_{i,j},$
we have
$$\overline{N}(T;C)=\sum_{i=1}^d \sum_{j=1}^k(N_{i,j}+\overline{N}_{i,j})$$
and
$$\overline{R}(T;C)=\sum_{i=1}^d \sum_{j=1}^k(R_{i,j}+\overline{R}_{i,j}).$$

Following the proof for trees, we show 
\begin{lemma}\label{lem:RgeqN}
For any $k$-tree $T$ and $k$-clique $C\in T$,
$$R(T;C)>\overline{N}(T;C).$$
\end{lemma}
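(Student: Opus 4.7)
The plan is to prove this by induction on $n = |T|$. The base case $T = C$ gives $R(T;C) = k > 0 = \overline N(T;C)$. For the inductive step, I would pick a $k$-leaf $v$ of $T$ with $v \notin C$; such a vertex exists whenever $|T| > k$, since the $1$-characteristic tree $T'_C$ has at least one non-root leaf in that case, and non-root leaves of $T'_C$ correspond precisely to $k$-leaves of $T$ lying outside $C$. Writing $T' = T - v$ and letting $C_v$ denote the $k$-clique to which $v$ is attached, partitioning sub-$k$-trees of $T$ according to whether they contain $v$ yields
\begin{align*}
R(T;C) &= R(T';C) + R(T';C,C_v) + N(T';C,C_v), \\
\overline N(T;C) &= \overline N(T';C) + N(T';C_v) - N(T';C,C_v),
\end{align*}
so that
$$R(T;C) - \overline N(T;C) = \bigl[R(T';C) - \overline N(T';C)\bigr] + \bigl[R(T';C,C_v) + 2N(T';C,C_v) - N(T';C_v)\bigr].$$
By the induction hypothesis the first bracket is strictly positive. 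When $C_v = C$ the second bracket equals $R(T';C) + N(T';C) \geq 0$, which completes the easy case.

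The hard case, and the main obstacle, is when $C_v \neq C$, where one needs the auxiliary inequality
$$R(T';C, C_v) + 2 N(T';C, C_v) \geq N(T';C_v), \qquad (\star)$$
equivalently, the number of sub-$k$-trees of $T'$ containing $C_v$ but not $C$ is at most $R(T';C,C_v) + N(T';C,C_v)$. I would establish $(\star)$ by constructing an injection that generalizes the corresponding tree-case argument from~\cite{WW16}. For each sub-$k$-tree $Y$ of $T'$ with $C_v \subseteq Y \not\supseteq C$, let $C_Y$ be the $k$-clique of $Y$ closest to $C$ along the perfect elimination ordering of $T'$ from $C$ (with a canonical tie-break), and let $M = M(C,C_Y)$ be the minimal sub-$k$-tree of $T'$ containing $C \cup C_Y$. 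Since $Y$ and $M$ share the $k$-clique $C_Y$, their union $X_Y = Y \cup M$ is again a sub-$k$-tree, and it contains both $C$ and $C_v$. Sending $Y$ to the pair $(X_Y, w_Y)$, where $w_Y$ is a canonical vertex of $C_Y \setminus C$ (or of $C_Y$ itself if $C_Y \subseteq C \cup C_v$), should give an injection into $\{(X,w) : X \supseteq C \cup C_v,\ w \in X\}$, which has cardinality $R(T';C,C_v)$, thus implying $(\star)$ with room to spare.

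The delicate technical step is verifying the injectivity of this charging and the well-definedness of $X_Y$ as a sub-$k$-tree. Injectivity would be established by showing that, given $(X,w)$ in the image, one recovers $C_Y$ as the unique $k$-clique of $X$ containing $w$ that is farthest from $C$ along the perfect elimination ordering, then determines $M$, and reconstructs $Y = X \setminus (M \setminus C_Y)$; by the minimality of $C_Y$ as the closest $k$-clique of $Y$ to $C$, the bridging vertices of $M \setminus C_Y$ are disjoint from $Y$, so this recovery is consistent. The subtle case is when $Y \cap C \neq \emptyset$ (which can occur whenever $C \cap C_v \neq \emptyset$), since then $Y \cap M$ could strictly contain $C_Y$ and one must use the structure of the $1$-characteristic tree and perfect elimination orderings to ensure the gluing is still clean; handling this carefully, without disrupting either the union being a sub-$k$-tree or the uniqueness of the reconstruction, is where the essential extra work over the tree case lies.
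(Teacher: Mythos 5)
Your overall strategy (delete a $k$-leaf $v\notin C$, induct on $|T|$) is genuinely different from the paper's, which instead takes a minimal counterexample, decomposes $T$ at $C$ into the rooted pieces $T_{i,j}$, uses the induction hypothesis only in the form $\overline{N}_{i,j}\le R_{i,j}=(k+\mu^\bullet_{i,j})N_{i,j}$, and then exploits linearity in the $\mu^\bullet_{i,j}$ and $\overline{N}_{i,j}$ to reduce everything to the purely numerical inequality~\eqref{eq:crucial_forRgeNbar}, checked by elementary case analysis on the degree $d$. Before comparing further, note a bookkeeping error in your recursion: $\overline{N}(T;C)$ must also count the $k$ sub-$k$-trees of order exactly $k$ through $v$ (the $k$-subcliques of $C_v\cup\{v\}$ containing $v$); these contain $v$ but not all of $C_v$, so they are not of the form $W\cup\{v\}$ with $W\supseteq C_v$, and none contains $C$. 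The correct identity is $\overline{N}(T;C)=\overline{N}(T';C)+N(T';C_v)-N(T';C,C_v)+k$, so your target $(\star)$ must be strengthened to $R(T';C,C_v)+2N(T';C,C_v)\ge N(T';C_v)+k$.

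The genuine gap is that $(\star)$ is where all of the difficulty of the lemma lives, and your injection does not establish it. The map $Y\mapsto(X_Y,w_Y)$ needs (i) a well-defined $k$-clique $C_Y$ of $Y$ ``closest'' to $C$, (ii) $Y\cap M(C,C_Y)=C_Y$, so that $Y$ is recoverable as $X_Y\setminus(M\setminus C_Y)$, and (iii) that the single vertex $w_Y$ determines $C_Y$ inside $X_Y$. None of these is available for $k\ge 2$: several $k$-cliques of $Y$ can simultaneously minimize $|M(C,\cdot)|$, so ``closest'' has no canonical meaning; distinct $k$-cliques of a $k$-tree overlap in up to $k-1$ vertices, so $Y$ can meet $C$ and the interior of $M$ nontrivially and the reconstruction deletes vertices that belong to $Y$; and a single vertex $w$ lies in many $k$-cliques of $X$ with no well-defined ``farthest from $C$'' one among them (for the same reason that two sub-$k$-trees containing $C_v$ but not $C$ can share all of $X_Y$ and have overlapping cliques $C_Y\setminus C$, breaking injectivity). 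You flag exactly these points as ``the essential extra work,'' but that work is the entire content of the lemma: as written, the argument reduces Lemma~\ref{lem:RgeqN} to an unproven two-clique inequality of at least comparable difficulty. (Note also that the cheap bound $R(T';C,C_v)\ge k\,N(T';C,C_v)$ does not rescue $(\star)$, since $N(T';C_v)/N(T';C,C_v)$ is unbounded; the sizes of the sub-$k$-trees must genuinely enter.) The paper's decomposition-at-$C$ route avoids any such two-clique statement, which is precisely what makes it go through.
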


\begin{proof}
    Assume to the contrary that there exists a minimum counterexample $T$. Since the statement is true when $T=C,$ we have $\abs T >k$ and we can consider the decomposition as before.
    
    Note that if $N_{i,j}=1,$ we have that $\overline N_{i,j}=0$, and otherwise we have $\overline N_{i,j} \le R_{i,j}=\mu_{i,j}N_{i,j}=(k+\mu^\bullet_{i,j})N_{i,j}.$

    We can rewrite $R(T;C)-\overline{N}(T;C)$ as 
    $$N(T;C) \left( k+ \mu^\bullet(T; C)\right)-\overline{N}(T;C).$$
    Expanding using~\eqref{rec:number-of-sub-$k$-trees} and \eqref{eq:mu_bullet(TC)},
    we note that the coefficient of $\mu^\bullet_{i,j}$ is at least equal to $N_{i,j}$.
    As such, it is sufficient to prove the inequality with $\mu^\bullet_{i,j}=0$ and $\overline N_{i,j}$ bounded by $k N_{i,j}.$

    Let $f$ be a function on the positive integers defined by $f(x)=\begin{cases}
        1 & \text{ if } x=1,\\
        1+kx & \text{ if } x>1.
    \end{cases}$

    We now want to prove that

    \begin{equation}\label{eq:crucial_forRgeNbar}
        \left( k+ \sum_{i=1}^d \frac{1}{1+\prod_{j=1}^k N_{i,j}^{-1}}  \right)N(T; C) \ge \sum_{i=1}^d \sum_{j=1}^k f(N_{i,j}).
    \end{equation}

    When $d=1,$ this becomes
    $k+(k+1)\prod_{j=1}^k N_{1,j} \ge \sum_{j=1}^k f(N_{1,j}).$
    When increasing a value $N_{1,j}$ which is at least equal to $2$, the left-hand side increases more than the right-hand side.
    As such, it is sufficient to consider the case where $a$ of the terms $N_{1,j}$ equal $2$, while the other $k-a$ terms equal $1$.
    In this case, the desired inequality holds since $k+(k+1)2^a > k+2ak$ for every integer $0 \le a \le k$.
    
    Next, we consider the case $d\ge 2.$
    In this case,
    when $N_{i,j}$ increases by $1$, the left-hand side of~\eqref{eq:crucial_forRgeNbar} increases by at least $2k$ and the right-hand side by at most $2k.$
    When all $N_{i,j}$ are equal to $1$, the conclusion follows from $\left(k+ \frac d2\right)2^d >k 2^d  > dk$.
\end{proof}

We now bound the local mean order by the global mean order.

\begin{proof}[Proof of Theorem~\ref{thr:prob2}]
    Let $T$ be a $k$-tree and $C$ a $k$-clique in $T$. We want to prove that
    $$\mu(T; C) < 2 \mu(T).$$
    We proceed by induction on the number of vertices in $T$. Note first that the inequality is trivial if $|T| \leq 2k$: since the mean is taken over sub-$k$-trees, which have at least $k$ vertices each, we have $\mu(T) \geq k$. On the other hand, we clearly have $\mu(T;C) \leq |T|$, and both inequalities hold with equality only if $|T|=k$.
    
    We thus proceed to the induction step, and assume that $\abs T > 2k$.
    We have two cases with respect to $C$.
    
    \textbf{Case 1:} $C$ is simplicial.
    
    Let $v$ be a $k$-leaf in $C$, let $C'$ denote the clique adjacent to $C$, and let $T'=T-\{v\}$. Moreover, let $N,\overline{N},R,$ and $\overline{R}$ denote $N(T';C'),\overline{N}(T ;C'),R(T';C')$ and $\overline{R}(T';C')$, respectively. We have 

    $$\mu(T;C)=\frac{N+R+k}{N+1}$$
    and
    $$\mu(T)=\frac{2R+\overline{R}+N+k^2}{2N+\overline{N}+k}.$$

    We want to prove that
    $$(2N+\overline{N}+k)(2\mu(T)-\mu(T;C))>0,$$
    which is equivalent to
    $$2R+2\overline{R} + 2k^2 - 2k -\frac{(N+R+k)(\overline{N}+k-2)}{N+1} > 0.$$
    By the induction hypothesis, we have
    $$2\frac{R+\overline{R}}{N+\overline{N}} = 2\mu(T') > \mu(T';C') = \frac{R}{N},$$
    so it is sufficient to prove that
    $$\frac{R(N+\overline{N})}{N} + 2k^2 - 2k -\frac{(N+R+k)(\overline{N}+k-2)}{N+1} > 0.$$
    Multiplying by $\frac{N+1}{N}$, this is seen to be equivalent to
    $$R - \overline{N} + 2k^2-3k+2 - \frac{(k-3)R + k\overline{N} - k^2}{N} + \frac{\overline{N}R}{N^2} > 0.$$
    This can be broken up into three terms as follows:
    $$\frac{(N^2-kN+3N+\overline{N})(R-\overline{N})}{N^2} + \Big( \frac{\overline{N}}{N} - k \Big)^2 + \Big( (k-1)(k-2) + \frac{k^2}{N} + \frac{3\overline{N}}{N} \Big) > 0.$$
    Note here that the second term is trivially nonnegative, and the last term trivially positive. Since $|T'| \geq 2k$, we have $N > k$ (one gets at least $k+1$ sub-$k$-trees containing $C'$ by successively adding vertices); hence $N^2-kN+3N+\overline{N} > 0$. Thus the first term is positive by Lemma~\ref{lem:RgeqN}, completing the induction step in this case.
    
    \textbf{Case 2:} $C$ is not simplicial.
    
    By Theorem~\ref{thr:prob4}, we only need to consider the case where $C$ has degree $1.$
    Let $v$ be the unique common neighbor of $C$, and let $C_i$, $1\le i \le k$, be the other $k$-cliques in the $(k+1)$-clique spanned by $C \cup \{v\}$.

    Let $T_i$, $1\le i \le k$, be the sub-$k$-trees rooted at $C_i$ (pairwise disjoint except for the vertices of the cliques $C_{i}$).
    Let $N_{i} = N(T_{i};C_{i})$, $\overline N_{i} = \overline N(T_{i};C_{i})$, $R_{i} = R(T_{i};C_{i})$, $\overline R_{i} = \overline R(T_{i};C_{i})$, $\mu_{i} = \mu(T_{i};C_{i})$ and $\mu^\bullet_{i} = \mu^\bullet(T_{i};C_{i})$.
    We can assume without loss of generality that $N_1 \ge N_2 \ge \cdots \ge N_j> 1=N_{j+1}= \ldots=N_k,$ where $j \ge 2$ since $C$ is not simplicial.

    We can now express the local and global mean in a similar way to Case 1.
    Here $N(T; C)=\prod_{i=1}^k N_i +1$, and all the sub-$k$-trees counted here, except for $C$, contain $v$. We have
    $$\mu(T; C)= \frac{ \prod_{i=1}^k N_i (1+ \sum_{i=1}^k \mu_i^\bullet)}{\prod_{i=1}^k N_i +1} +k,$$
    $$\mu(T)= \frac{ \prod_{i=1}^k N_i (1+ \sum_{i=1}^k \mu_i^\bullet+k) +\sum_{i=1}^k (R_i +\overline R_i)  +k}{\prod_{i=1}^k N_i +\sum_{i=1}^k (N_i +\overline N_i)+1}.$$

In the remainder of this section, we will omit the bounds in products and sums if they are over the entire range from $1$ to $k$: $\sum N_i$ and $\prod N_i$ mean $\sum_{i=1}^k N_i$ and $\prod_{i=1}^k N_i$ respectively.

Then $\left( \prod N_i +\sum (N_i +\overline N_i)+1 \right) (2 \mu(T)-\mu(T; C))$ equals
\begin{equation}\label{eq:want-to-be-nonneg}
(\prod N_i)(1+ \sum \mu_i^\bullet+k) +k+2\sum (R_i +\overline R_i)  -k\sum (N_i +\overline N_i) - \frac{ (\prod N_i) (1+ \sum \mu_i^\bullet)\sum (N_i +\overline N_i)}{\prod N_i +1}.
\end{equation}
We want to show that this expression is positive.
By induction, we know that $2(R_i+\overline R_i) > (k+ \mu_i^\bullet)(N_i +\overline N_i)$, thus $2(R_i+\overline R_i) - k(N_i +\overline N_i) > \mu_i^\bullet(N_i +\overline N_i)$.
It follows that~\eqref{eq:want-to-be-nonneg} is greater than
$$
(\prod N_i) (1+ \sum \mu_i^\bullet+k) +k +\sum \mu_i^\bullet(N_i +\overline N_i) - \frac{ (\prod N_i) (1+ \sum \mu_i^\bullet)\sum (N_i +\overline N_i)}{\prod N_i +1}.
$$
Multiplying by $\frac{\prod N_i +1}{\prod N_i}$ and observing that this factor is greater than $1$, we find that~\eqref{eq:want-to-be-nonneg} is indeed positive if we can prove that
\begin{equation}\label{eq:crucialIneq2M-mu}
(\prod N_i+1) (1+ \sum \mu_i^\bullet+k) +k+\sum \mu_i^\bullet(N_i + \overline N_i) \ge (1+ \sum \mu_i^\bullet) \sum (N_i +\overline N_i).
\end{equation}

In particular, a potential counterexample would have to satisfy \begin{equation}\label{eq:obs}
    \frac{ (1+ \sum \mu_i^\bullet)\sum (N_i +\overline N_i)}{\prod N_i +1}>1+ \sum \mu_i^\bullet+k.
\end{equation}

\newpage
To simplify proving~\cref{eq:crucialIneq2M-mu}, we first note that it is sufficient to consider the case where $k=j.$
Once $N_i, \mu_i^\bullet$ and $ \overline N_i$ are fixed for $1 \le i \le j,$ the terms that are dependent on $k$ are 
$(\prod_{i \le j} N_i +1) k + k$ on the left,
and $(1+\sum_{i \le j} \mu_i^\bullet)(k-j)$ on the right. The latter since if $N_i=1$, then $T_i$ only consists of $C_i$, and thus $\overline N_i = \mu_i^\bullet = 0$.
Now since $\prod_{i \le j} N_i +1 \ge 1+ \sum_{i \le j} N_i > 1+\sum_{i \le j} \mu_i^\bullet,$ the increase of the left side is larger than the increase on the right side.
Here  $\prod_{i \le j} N_i \ge \sum_{i \le j} N_i$ is true since the product of $j \ge 2$ numbers, each greater than or equal to $2$, is at least equal the sum of the same $j$ numbers. Moreover, $N_i > \mu_i^\bullet$ follows from Lemma~\ref{lem:mu_ifv_N}.

So from now on, we can assume that $j = k$ and all $N_i$ are at least equal to $2.$

By \cref{lem:RgeqN}, $\overline N_i \le R_i=\mu_i N_i =(k+\mu_i^\bullet)N_i$.
Since~\eqref{eq:crucialIneq2M-mu} is a linear inequality in each $\overline N_i$ and the coefficient on the right-hand side is always greater than the coefficient on the left-hand side, we can reduce \cref{eq:crucialIneq2M-mu} to a sufficient inequality that is only dependent on $k$, $N_i$ and $\mu_i^\bullet$ for $1 \le i \le k$ by taking $\overline N_i =(k+\mu_i^\bullet)N_i$. This will be assumed in the following.

If now all the parameters in~\eqref{eq:crucialIneq2M-mu} are fixed except for one $\mu_i^\bullet$, we have a linear inequality in $\mu_i^\bullet$: the quadratic terms stemming from $\mu_i^{\bullet} \overline N_i$ are equal on both sides and cancel.
As such, it is sufficient to prove the inequality for the extremal values of $\mu_i^\bullet$. Here we use the trivial inequality $\mu_i^\bullet \ge 0$ as well as the upper bound $\mu_i^\bullet \le \frac{N_i-1}{2}$, which is taken from \cref{lem:mu_ifv_N}. So if~\eqref{eq:crucialIneq2M-mu} can be proven in the case where $\overline N_i =(k+\mu_i^\bullet)N_i$ and $\mu_i^\bullet \in \{0,\frac{N_i-1}{2}\}$ for all $i$ with $1 \le i \le k$, we are done.

For $2 \le k \le 5$, this is achieved by exhaustively checking all $2^k$ cases that result (using symmetry, there are actually only $k+1$ cases to consider). See the detailed verifications in \url{https://github.com/StijnCambie/AvSubOrder_ktree}.
So for the rest of the proof, we assume that $k \ge 6$, and we will use the slightly weaker bound $\mu_i^\bullet \le \frac{N_i}{2}$ instead of $\mu_i^\bullet \le \frac{N_i-1}{2}$ for $1 \le i \le k$ in a few cases. 

We distinguish two further cases, depending on the value of $\mu_1^\bullet$.
In these cases, we will use the following two inequalities.

\begin{claim}
    Let $k \ge 6$ and $N_1 \ge N_2 \ge \ldots \ge N_k \ge 2$.
    Then 
    \begin{align}
         \prod_{i=2}^k N_i &\ge 3  \sum_{2\le i \le k} N_i,\label{itm:a}\\
    \frac {5}{3} (\prod_{i=1}^k  N_i+1) &\ge  \left(  \sum_{2\le i \le k} N_i -2 \right)\sum_{1\le i \le k}N_i \label{itm:b} .
    \end{align}
\end{claim}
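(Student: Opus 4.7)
The plan is to prove the two inequalities in sequence, using the first to help with the second. Throughout, set $S' = \sum_{i=2}^k N_i$ and $S = N_1 + S'$.

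For the first inequality, the idea is a clean lower bound on $\prod_{i=2}^k N_i$ in terms of $S'$. Since $N_2$ is the largest of $N_2,\ldots,N_k$, we have $N_2 \ge S'/(k-1)$, while $\prod_{i=3}^k N_i \ge 2^{k-2}$ because each factor is at least $2$. Combining these yields $\prod_{i=2}^k N_i \ge \frac{2^{k-2}}{k-1}\, S'$, and the factor $\frac{2^{k-2}}{k-1}$ is at least $3$ for every $k \ge 6$ (it equals $16/5$ at $k=6$ and is strictly increasing in $k$).

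For the second inequality, I set $P = \prod_{i=1}^k N_i$ and consider $f(N_1,\ldots,N_k) := \frac{5}{3}(P+1) - (S'-2)\, S$. The approach is to show that every partial derivative $\partial f/\partial N_j$ is positive on the feasible region, so that $f$ attains its minimum at the corner $N_1 = N_2 = \cdots = N_k = 2$. For $j = 1$ one has $\partial f/\partial N_1 = \frac{5}{3}\prod_{i=2}^k N_i - (S'-2)$, which is positive by the first inequality (indeed at least $5S' - (S'-2) = 4S' + 2$). For $j \ge 2$, $\partial f/\partial N_j = \frac{5P}{3N_j} - (S + S' - 2)$; using $P/N_j \ge N_1 \cdot 2^{k-2}$ (the remaining factors include $N_1$ and $k-2$ values at least $2$) together with $S + S' - 2 \le (2k-1)N_1 - 2$ (which follows from $S' \le (k-1)N_1$), one obtains the lower bound $N_1 \bigl(\tfrac{5 \cdot 2^{k-2}}{3} - (2k-1)\bigr) + 2$. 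The bracketed expression is positive for $k \ge 6$ (it evaluates to $47/3$ at $k = 6$), so $\partial f/\partial N_j > 0$.

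Once coordinate-wise strict monotonicity is established, the minimum of $f$ on $\{N_1 \ge N_2 \ge \cdots \ge N_k \ge 2\}$ occurs at $N_i = 2$ for all $i$, where direct substitution yields $f(2,\ldots,2) = \frac{5}{3}(2^k + 1) - 4k(k-2)$. This equals $37/3$ at $k = 6$ and remains positive for all $k \ge 7$ since the exponential $\frac{5 \cdot 2^k}{3}$ outgrows the quadratic $4k(k-2)$ (a short induction on $k$ confirms this). The main obstacle is the coordinate-wise monotonicity step: the requirement that $\frac{5 \cdot 2^{k-2}}{3} > 2k - 1$ is precisely what pins down the hypothesis $k \ge 6$, consistent with the separate case-by-case handling of smaller $k$ earlier in the proof.
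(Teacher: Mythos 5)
Your proof is correct. For inequality \eqref{itm:b} you follow essentially the same route as the paper: establish that the difference of the two sides is increasing in each coordinate on the region $N_1 \ge \cdots \ge N_k \ge 2$ and then check the corner $N_1=\cdots=N_k=2$, where $\tfrac53(2^k+1) \ge 4k(k-2)$ holds for $k\ge 6$. The only cosmetic difference is that you verify monotonicity via partial derivatives (using \eqref{itm:a} for $\partial/\partial N_1$ and the bound $\tfrac{5\cdot 2^{k-2}}{3} > 2k-1$ for the other coordinates), whereas the paper argues with unit increments of the integer variables; the paper's monotonicity check in $N_j$ for $j\ge 2$ also leans on \eqref{itm:a} (with $N_1$ replaced by $N_j$), so the logical dependence is the same. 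For inequality \eqref{itm:a} your argument genuinely differs: the paper again does the corner-plus-monotonicity induction ($2^{k-1} > 6(k-1)$, and each unit increase of some $N_i$ raises the product by at least $2^{k-2}$ versus $3$ for the right side), while you give a direct one-line bound $\prod_{i=2}^k N_i \ge N_2 \cdot 2^{k-2} \ge \frac{2^{k-2}}{k-1}\sum_{i=2}^k N_i$, which is arguably cleaner and makes the role of the hypothesis $k\ge 6$ (namely $2^{k-2} \ge 3(k-1)$) more transparent. Both arguments are elementary and of comparable length; neither buys additional generality.
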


\begin{claimproof}
    The first inequality,~\cref{itm:a}, is true if all the $N_i$ are equal to $2$, since $2^{k-1}>6(k-1)$ for every $k \ge 6.$ Increasing some $N_i$ by $1$ increases the product by at least $2^{k-2}$, while the sum increases by only $3$. So the inequality holds by a straightforward inductive argument.

    Next, we prove~\cref{itm:b}.
    When all $N_i$ are equal to $2$, it becomes $\frac 53(2^k+1)\ge 4(k-2)k$. This is easily checked for $k \in\{6,7\}$, and for $k \ge 8$, the stronger inequality $2^k \ge 4k^2$ can be shown by induction.

    Now observe that the difference between the left- and right-hand sides is increasing with respect to $N_1$, since $\prod_{i \ge 2} N_i > \sum_{i \ge 2} N_i$ by the first inequality. It is also increasing in the other $N_i$'s; for example, we can see this is true for $N_2$
    since 
    $$\frac {5}{3} \prod_{i \not=2} N_i \ge 5 \sum_{i \not=2} N_i  > 2 \sum_{1\le i \le k}N_i > \left(  \sum_{2\le i \le k} N_i -2 \right) + \sum_{1\le i \le k}N_i  .$$
    In the first step, we have applied~\cref{itm:a} but replacing $N_1$ with $N_2$. Again, we may conclude using induction.
\end{claimproof}

\begin{claim}\label{clm:N_i/2everywhere}
    Given $\mu_1^\bullet= \frac{N_1}{2}$, it is sufficient to consider the case where $\mu_i^\bullet= \frac{N_i}{2}$ for all $1 \le i \le k.$
\end{claim}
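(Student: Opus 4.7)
With $\mu_1^\bullet = N_1/2$ fixed and each $\overline N_i$ set to $(k+\mu_i^\bullet)N_i$, the difference $F := \mathrm{LHS}-\mathrm{RHS}$ of~\eqref{eq:crucialIneq2M-mu} becomes multi-affine in the remaining variables $\mu_2^\bullet,\ldots,\mu_k^\bullet$: the quadratic $(\mu_i^\bullet)^2 N_i$ terms appear on both sides with equal coefficients and cancel, leaving only bilinear cross terms $\mu_i^\bullet \mu_j^\bullet$. Hence the minimum of $F$ over $\prod_{i\ge 2}[0,N_i/2]$ is attained at one of the $2^{k-1}$ corners, and the plan is to show this minimum is attained at the all-max corner where $\mu_i^\bullet = N_i/2$ for every $i$.

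To this end, for each $j \ge 2$ I expand
\[
\frac{\partial F}{\partial \mu_j^\bullet} \;=\; A_j \;-\; \sum_{i \ne j}\mu_i^\bullet N_i \;-\; N_j \sum_{i \ne j}\mu_i^\bullet, \qquad A_j := (P+1) - (k+1)\sum_{l} N_l + k N_j,
\]
and try to show it is $\le 0$ at every corner. Since $\mu_1^\bullet = N_1/2$ always appears in both subtracted sums (as $j \ne 1$), we immediately have $\sum_{i \ne j}\mu_i^\bullet N_i \ge N_1^2/2$ and $\sum_{i \ne j}\mu_i^\bullet \ge N_1/2$, reducing the problem to verifying
\[
(P+1) + k N_j \;\le\; (k+1)\sum_l N_l + \frac{N_1^2 + N_1 N_j}{2}.
\]
I expect to establish this from the ordering $N_1 \ge N_j \ge 2$ together with the preceding product-versus-sum inequalities~\eqref{itm:a} and~\eqref{itm:b} for $k \ge 6$, which precisely control $P+1$ against linear combinations of $\sum_l N_l$ and $N_1$.

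The main obstacle is the edge case where all $N_i$ are close to $2$: here $N_1^2/2$ is too small relative to $P+1$ to make the monotonicity bound above hold. In that regime the partial-derivative bound indeed fails, but a direct check shows that the necessary condition~\eqref{eq:obs} for a counterexample also fails at every relevant corner, so $F \ge 0$ holds automatically (as noted right after~\eqref{eq:obs}). Consequently every non-all-max corner with $\mu_1^\bullet = N_1/2$ is disposed of either by the monotonicity inequality or by~\eqref{eq:obs} failing, leaving only the all-max configuration as the single sub-case requiring direct verification, which is precisely the reduction the claim asserts.
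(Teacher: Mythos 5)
Your setup matches the paper's in spirit (fix $\overline N_i=(k+\mu_i^\bullet)N_i$, note that the difference $F$ of the two sides of \eqref{eq:crucialIneq2M-mu} is affine in each $\mu_j^\bullet$, and push each variable to the endpoint where $F$ is smaller), and your formula for $\partial F/\partial\mu_j^\bullet$ is correct. The gap is in the step that determines the sign of this derivative. The sufficient inequality you extract, $(P+1)+kN_j\le (k+1)\sum_l N_l+\tfrac{N_1^2+N_1N_j}{2}$, pits the product $P=\prod N_i$ against a quantity that is only quadratic in the $N_i$, so it fails on most of the parameter space rather than in an edge case: already for $k=7$ with all $N_i=2$ it reads $143\le 116$; for $k=6$ with all $N_i=3$ it reads $748\le 135$; for $k=6$, $N_1=10$, $N_2=\cdots=N_6=2$ it reads $333\le 200$. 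In fact the only regime where it does hold is when a single $N_1$ is very large and the remaining $N_i$ are bounded (so that $N_1^2/2$ dominates $P\approx 2^{k-1}N_1$) --- essentially the opposite of the regime you identify as problematic. Your fallback, that wherever this monotonicity bound fails the necessary condition \eqref{eq:obs} for a counterexample also fails, is precisely the statement that would need to be proved; the two conditions are cut out by unrelated inequalities, you give no argument that their union covers all corners, and verifying it would require a case analysis over all configurations of $(N_1,\dots,N_k)$ that you have not carried out.

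The paper's proof avoids this by making the monotonicity claim \emph{conditional} rather than unconditional: it assumes the current point is a counterexample to \eqref{eq:crucialIneq2M-mu}, so that \eqref{eq:obs} is available as a hypothesis, and then derives a contradiction with \eqref{itm:a} from the assumption that the coefficient of $\mu_j^\bullet$ on the left-hand side exceeds the one on the right-hand side. This shows that at any counterexample with $\mu_1^\bullet=N_1/2$ each $\mu_j^\bullet$ can be pushed up to $N_j/2$ while remaining at a counterexample, which is exactly what the claim requires; since $F$ only decreases at each step, \eqref{eq:obs} remains valid and the argument can be iterated over $j$. To repair your proof you should feed \eqref{eq:obs} into the derivative estimate in the same way, rather than trying to bound the derivative using only the term $\mu_1^\bullet N_1=N_1^2/2$.
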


\begin{claimproof}
    Starting from any counterexample to~\eqref{eq:crucialIneq2M-mu}, we can iteratively change $\mu_i^\bullet$ (considered as variables) for $1\le i\le k$ based on the worst case of the linearization (to $0$ if the coefficient on the left-hand side is greater and to $\frac{N_i}{2}$ if the coefficient on the right-hand side is greater) to obtain further counterexamples.
    
    To show that it suffices to consider $\mu_i^\bullet= \frac{N_i}{2}$ for every $1 \le i \le k$, we prove that, given $\mu_1^\bullet= \frac{N_1}{2}$, the coefficient of $\mu_2^\bullet$ on the left-hand side in~\cref{eq:crucialIneq2M-mu} is not greater than the coefficient on the right-hand side. This then implies that $\mu_2^\bullet= \frac{N_2}{2}$ is indeed the worst case. For $3 \le i \le k$, we can argue in the same fashion.
    
    Assume for sake of contradiction that the coefficient on the left-hand side is greater. Recall that $\overline{N}_2 = (k+\mu_2^\bullet)N_2.$
    After subtracting $\mu_2^\bullet(N_2+\overline N_2)$ from both sides, the coefficient of $\mu_2^\bullet$ on the left is $\prod N_i +1$, while on the right it is
    $\sum_{i \not=2} (N_i +\overline N_i) + (1+\sum_{i \not=2}  \mu_i^\bullet)N_2$. Thus we must have
$$\prod N_i +1 > \sum_{i \not=2} (N_i +\overline N_i) + (1+\sum_{i \not=2}  \mu_i^\bullet)N_2 \ge \sum_{i \not=2} (N_i +\overline N_i) + (1+\mu_1^\bullet)N_2.$$
    Using that
$$(1+ \mu_1^\bullet)N_2 = \left(1 + \frac{N_1}{2}\right)N_2 \ge \left(1 + \frac{N_2}{2}\right)N_2 \ge (1+ \mu_2^\bullet)N_2 = N_2 +\overline N_2-kN_2$$ 
(recall here that we are assuming without loss of generality that $N_1 \ge N_2 \ge \cdots \ge N_k$), this implies that
$\prod N_i +1 \ge \sum (N_i +\overline N_i)-kN_2.$
Adding $kN_2$ to both sides and multiplying both sides by $1+ \sum \mu_i^\bullet $
results in
$$\left( \prod N_i +1 + kN_2 \right)(1+ \sum \mu_i^\bullet) \ge (1+ \sum \mu_i^\bullet)  \sum (N_i +\overline N_i) >  (1+ \sum \mu_i^\bullet+k) (\prod N_i +1). $$
In the second inequality, we applied~\eqref{eq:obs} which we may do since we began with the assumption that we have a counterexample to~\eqref{eq:crucialIneq2M-mu}.
After simplification, we get that 
$N_2(1+ \sum \mu_i^\bullet) >\prod N_i$, and thus
$$\sum_{i \not=2} N_i> 1+\frac 12 \sum N_i\ge 1+\sum \mu_i^\bullet>\prod_{i\not=2} N_i.$$
Since $k \ge 6$, 
this is a clear contradiction to~\cref{itm:a}.
\end{claimproof}

Having proven Claim~\ref{clm:N_i/2everywhere}, we are left with two cases to consider: $\mu_i^\bullet= \frac{N_i}{2}$ for all $1 \le i \le k$, or $\mu_1^\bullet = 0$. It is easy to conclude in the former case, except when $k=6$ and at least $5$ values $N_i$ are equal to $2$, which has to be handled separately.
See \url{https://github.com/StijnCambie/AvSubOrder_ktree/blob/main/2M-mu_j_large_case1.mw} for details. 

The final remaining case is when $\mu_1^\bullet=0.$
We obtain two new inequalities by multiplying~\cref{itm:b} with $\frac{k+1}{2}$ and~\cref{itm:a} with $(1+ \sum \mu_i^\bullet)  \frac{N_1}6$, and use that $N_1 =\max\{ N_i\}$ and $\mu_i^\bullet \le \frac{N_i-1}{2}.$

\begin{align}
     \frac {5(k+1)}{6} (\prod N_i+1) &\ge \frac {k+1}2 \left(  \sum_{2\le i \le k} N_i -2 \right)\sum_{1\le i \le k} N_i\ge
    (1+ \sum \mu_i^\bullet) \sum_{1\le i \le k} (1+k)N_i,
     \label{itm:2} \\
     \frac{\prod N_i}6 (1+ \sum \mu_i^\bullet) &\ge (1+ \sum \mu_i^\bullet)  \frac{N_1}{2} \sum_{2\le i \le k} N_i \ge (1+ \sum \mu_i^\bullet) \sum_{2\le i \le k} N_i \mu_i^\bullet. \label{itm:3}
\end{align}
Summing these two inequalities together, we have that~\cref{eq:crucialIneq2M-mu} holds as a corollary of
$$(\prod N_i+1) (1+ \sum \mu_i^\bullet+k)\ge (1+ \sum \mu_i^\bullet) \sum \left(N_i(1+k+\mu_i^\bullet)\right).$$
\end{proof}

We remark that by considering a suitable $k$-broom, one can show that \cref{thr:prob2} is sharp, as was also the case for trees.

\section{Conclusion}\label{sec:conc}

This paper together with~\cite[Thm.~18 \& 20]{LX23} answers all of the open questions from \cite{SO18} except for one, which was stated as rather general and open-ended:
\begin{problem}\label{prob:5}
 For a given $r$-clique $R$, $1\leq r<k$, what is the (local) mean order of all sub-$k$-trees containing $R$?
\end{problem}

One natural version of this question is to consider the local mean sub-$k$-tree order over sub-$k$-trees that contain a fixed vertex. In this direction, we prove the following result, which can be considered as another monotonicity result related to~\cite[Thm.~23]{LX23}.

\begin{theorem}
    Let $T$ be a $k$-tree, $k \ge 2$, $C$ a $k$-clique of $T$, and $v$ a vertex in $C$. Then $\mu(T; v) < \mu(T; C).$
\end{theorem}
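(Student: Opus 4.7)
The plan is to prove the equivalent inequality $R(T;v)\,N(T;C) < R(T;C)\,N(T;v)$, or in other words that the mean order over sub-$k$-trees of $T$ containing $v$ but not all of $C$ is strictly less than $\mu(T;C)$. I would use induction on $|T|$. The base case $T = K_{k+1}$ is direct: one checks $\mu(T;v) = k + 1/(k+1) < k + 1/2 = \mu(T;C)$ for every $k \ge 2$.

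For the inductive step I would re-use the decomposition from Section~\ref{sec:prob4}: $T = C \cup T_{1,1} \cup \cdots \cup T_{d,k}$ with branches $T_{i,j}$ rooted at $C_{i,j}$, together with the parameters $N_{i,j}$, $\mu^\bullet_{i,j}$, $P_i = \prod_{j} N_{i,j}$, $M_i = \sum_j \mu^\bullet_{i,j}$. The first key step is the structural claim that every sub-$k$-tree $S$ of $T$ with $v \in S$ and $C \not\subseteq S$ lies inside exactly one branch $T_{i,j}$, and necessarily $v \in C_{i,j}$: two distinct branches $T_{i,j},T_{i',j'}$ meet in a vertex set of size at most $k-1$ (the intersection of their base cliques), which is too small to carry a sub-$k$-tree. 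This yields
\begin{align*}
\Sigma_N &:= N(T;v) - N(T;C) = \sum_{(i,j):\,v \in C_{i,j}} N(T_{i,j};v), \\
\Sigma_R &:= R(T;v) - R(T;C) = \sum_{(i,j):\,v \in C_{i,j}} R(T_{i,j};v),
\end{align*}
so the desired inequality reduces to $\Sigma_R - k\Sigma_N < \Sigma_N \cdot \mu^\bullet(T;C)$.

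Applying the induction hypothesis in each branch $T_{i,j}$ with $v \in C_{i,j}$ (with equality when $T_{i,j}=C_{i,j}$) gives $R(T_{i,j};v) \le (k + \mu^\bullet_{i,j})\,N(T_{i,j};v)$, hence $\Sigma_R - k\Sigma_N \le \sum_{(i,j):\,v \in C_{i,j}} \mu^\bullet_{i,j}\, N(T_{i,j};v)$, strictly when some such branch is non-trivial. To bound this by $\Sigma_N \cdot \mu^\bullet(T;C) = \Sigma_N \sum_i (1+M_i)P_i/(P_i+1)$, I would argue per~$i$: the weighted average of $\mu^\bullet_{i,j}$ over $j$ with $v \in C_{i,j}$ (with weights $N(T_{i,j};v)$) is at most $\max_j \mu^\bullet_{i,j} \le M_i$, while Lemma~\ref{lem:mu_ifv_N}'s bound $\mu^\bullet_{i,j} \le (N_{i,j}-1)/2$ combined with Lemma~\ref{lem:trivialinequality} applied to $N_{i,1},\ldots,N_{i,k}$ yields $M_i \le P_i$, which is equivalent to $M_i \le (1+M_i)P_i/(P_i+1)$. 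The per-$i$ bound then follows, and summing over $i$ via $\sum_i a_i b_i \le (\sum_i a_i)(\sum_i b_i)$ for nonnegative $a_i,b_i$ closes the chain.

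Strictness is automatic: either some branch with $v \in C_{i,j}$ is non-trivial (so the inductive step is strict), or all such branches are trivial, in which case $\Sigma_R - k\Sigma_N = 0$ while $\Sigma_N \ge d(k-1) \ge 1$ and $\mu^\bullet(T;C) > 0$ (since $|T|>k$ forces $d \ge 1$), giving the strict bound. The main obstacle in this plan is the structural uniqueness claim—that a sub-$k$-tree containing $v$ but not $C$ lies in exactly one branch; after this combinatorial step is locked in, the algebraic chain is a careful combination of the paper's Lemmas~\ref{lem:mu_ifv_N} and~\ref{lem:trivialinequality}.
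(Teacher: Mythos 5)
Your proof is correct, but it takes a genuinely different route from the paper's. Both arguments start from the same decomposition of $T$ into $C$ and the branches $T_{i,j}$, and both rest on the structural fact that a sub-$k$-tree containing $v$ but not all of $C$ lies in exactly one branch $T_{i,j}$ with $v\in C_{i,j}$ (your uniqueness argument via $|V(T_{i,j})\cap V(T_{i',j'})|\le k-1$ is sound, and the existence half is the same fact the paper already invokes at the start of Section~\ref{sec:prob2}). From there the paper reduces to the single branch maximizing $\mu(T_{i,j};v)$, then to the case of a simplicial $C$, and finishes by deleting a $k$-leaf and comparing $\mu(T';C')$ with $\mu(T;C)$; you instead keep all branches in play and close the reduced inequality $\Sigma_R-k\Sigma_N<\Sigma_N\,\mu^\bullet(T;C)$ by an explicit algebraic chain built on~\eqref{eq:mu_bullet(TC)} together with Lemma~\ref{lem:mu_ifv_N} (giving $\mu^\bullet_{i,j}\le\frac{N_{i,j}-1}{2}$) and Lemma~\ref{lem:trivialinequality} (giving $\sum_j N_{i,j}\le P_i+k-1$), which combine to $M_i\le\frac{P_i-1}{2}<P_i$, i.e.\ $M_i\le\frac{(1+M_i)P_i}{P_i+1}$. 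Your route is longer but self-contained and quantitative, avoiding the paper's rather terse ``it suffices to consider the simplicial case'' reduction; the paper's route is shorter on the page because it delegates the work to monotonicity-style reductions. Two small points to tidy up: state the induction hypothesis in the non-strict form $\mu(T';v)\le\mu(T';C')$ with equality only for $T'=K_k$ (as literally stated the theorem fails, with equality, when $T=K_k$, and you need the non-strict version to feed the trivial branches $T_{i,j}=C_{i,j}$ into the induction); and note that since $M_i<P_i$ strictly and $\Sigma_{N,i}\ge k-1\ge 1$, the step $M_i\Sigma_{N,i}\le\frac{(1+M_i)P_i}{P_i+1}\Sigma_{N,i}$ is already strict for every $i$, so your separate case analysis for strictness, while correct, is not actually needed.
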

    
\begin{proof}
    The statement is trivially true if $\abs T \le k+1.$ So assume that $T$ with $\abs T \ge k+2$ is a minimum counterexample to the statement. Recalling the decomposition into trees $T_{i,j}$ used earlier, note that all sub-$k$-trees containing $v$ and not $C$ are part of $T_{i,j}$ for some $i, j$.
    Without loss of generality, we can assume that $\mu(T_{1,1}; v)=\max_{i,j} \mu(T_{i,j}; v).$
    It suffices to prove that $\mu(T_{1,1}; v)< \mu(T; C)$. Taking into account~\eqref{eq:mu_bullet(TC)}, it is sufficient to consider the case where $C$ is a simplicial $k$-clique of $T$. 
    Let $u$ be the simplicial vertex of $B_1$.
    Let $T'=T \setminus \{u\}$ and $C'=B_1 \setminus \{u\}$.
    \begin{claim}
        We have $\mu(T'; C')< \mu( T; C)$.
    \end{claim}
    \begin{claimproof}
        Let $R=R(T'; C')$ and $N=N(T'; C').$
        We now need to prove that $\mu( T; C)=\frac{2R+N+k}{2N+1}> \frac{R}{N}=\mu(T'; C')$, which is equivalent to $N+k>\frac{R}{N}.$ The latter is immediate since $N \ge \abs {T'} -(k-1)$ and $\frac RN=\mu(T'; C') \le \abs {T'}.$ 
    \end{claimproof}
    Since the sub-$k$-trees containing $v$ are exactly those that contain $C$, or sub-$k$-trees of $T'$ containing $C'$, or $k$-cliques within $B_1$ different from $C$, we conclude that
    $\mu(T; v) < \mu(T; C).$
\end{proof}

Luo and Xu~\cite[Ques.~35]{LX23} also asked if for a given order, a $k$-tree attaining the largest global mean sub-$k$-tree order is necessarily a caterpillar-type $k$-tree.
In contrast with the questions in~\cite{SO18}, this question is still open for trees.
We prove that the local version (proven in~\cite[Thm.~3]{CWW21}), which states that for fixed order, the maximum is attained by a broom,
is also true for the generalization of $k$-trees.
This is almost immediate by observations from~\cite{SO18}.

\begin{proposition}
 If a $k$-tree $T$ of order $n$ and $k$-clique $C$ of $T$ attain the maximum possible value of $\mu(T; C),$ then $T$ has to be a $k$-broom with $C$ being one of its simplicial $k$-cliques.
\end{proposition}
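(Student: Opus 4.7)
My approach is to reduce this statement to the analogous result for ordinary trees, \cite[Thm.~3]{CWW21}, via the $1$-characteristic tree $T'_C$ introduced in \cite{SO18}. For $n \le k+2$ the claim is immediate, so assume $n \ge k+3$. If $(T,C)$ attains the maximum, Theorem~\ref{thr:prob4} guarantees that $C$ has degree $1$ in $T$. Invoking the identity $\mu(T;C) = \mu(T'_C;C) + k - 1$ (from \cite[Thm.~33]{LX23}, already used in the proof of Theorem~\ref{thr:prob6}), the problem reduces to maximizing the local mean subtree order $\mu(T';v)$ over all rooted trees $(T',v)$ of order $n-k+1$.

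Next, I would observe that any rooted tree on $n-k+1$ vertices can be realized as the $1$-characteristic tree of some $(T,C)$: starting with $C=K_k$ and traversing $T'$ outward from $v$, each edge of $T'$ is realized by adjoining a new vertex of $T$ that extends some $k$-subclique incident to the already-constructed parent vertex. Any of the $k$ admissible choices of subclique works, which is precisely what allows arbitrary rooted tree shapes to appear. Applying \cite[Thm.~3]{CWW21} to the resulting tree-optimization problem then yields that the maximum of $\mu(T';v)$ is attained when $T'$ is a $1$-broom and $v$ sits at the leaf-tip of its handle, opposite the pendant leaves.

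Finally, I would translate this extremal structure back to the $k$-tree setting: the handle of $T'$ corresponds to a $k$-path of $T$ starting at $C$, and the pendant leaves correspond to extra $k$-leaves attached to the terminal $(k+1)$-clique of this $k$-path. Arranging these $k$-leaves to share a common simplicial $k$-clique of that $(k+1)$-clique (a modification that leaves the $1$-characteristic tree, and hence $\mu(T;C)$, unchanged) yields exactly a $k$-broom in which $C$ is a simplicial $k$-clique at the far end of the handle. The only care-point is the realizability claim and the fact that the $k$-tree is not uniquely determined by its $1$-characteristic tree; but the ambiguity is confined to the internal attachment choices at the terminal $(k+1)$-clique, which does not affect the broom shape. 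This is why the authors can describe the result as being almost immediate from observations in \cite{SO18}.
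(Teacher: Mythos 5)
Your proposal follows essentially the same route as the paper's proof: reduce to the tree case via the $1$-characteristic tree and the identity $\mu(T;C)=\mu(T'_C;C)+k-1$, apply the broom result \cite[Thm.~3]{CWW21} to the rooted tree $T'_C$, and translate the extremal structure back to a $k$-broom with $C$ simplicial. The differences are cosmetic — the appeal to Theorem~\ref{thr:prob4} is unnecessary, and your realizability/translation discussion merely spells out steps the paper leaves implicit — so this matches the paper's argument.
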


\begin{proof}
 Let $T'_C$ be the characteristic $1$-tree of $T$ with respect to $C.$
 Then by~\cite[Lem.~11]{SO18} $\mu(T; C) = \mu(T'_C; C)+k-1$.
 Since $T'_C$ is a tree on $n-(k-1)$ vertices, by~\cite[Thm.~3]{CWW21}, the maximum local mean subtree order is attained by a broom $B$. Since there is a $k$-broom $T$ with $C$ being a simplicial $k$-clique for which $T'_C\cong B$ with $C$ as root, this maximum can be attained.
 Reversely, if $\mu(T'_C; C)$ is maximized, then $T'_C$ is a broom where $C$ is its root (and thus simplicial).
\end{proof}

As such, we conclude that the $k$-tree variants of many results on the average subtree order for trees are now also proven.
The analogue of~\cite[Ques.~(7.5)]{jamison_average_1983} can be considered as the only question among them where the answer is slightly different for $k$-trees: in contrast with the case of trees ($k = 1$), the maximum local mean sub-$k$-tree order cannot occur in a $k$-clique with degree $2$ when $k \ge 2$ (with one small exception).

\section*{Acknowledgments}
The authors would like to express their gratitude to the American Mathematical Society for making a research visit possible related to the Mathematics Research Community workshop ``Trees in Many Contexts''.
This was supported by the National Science Foundation under Grant Number DMS $1916439$. 
The first author has been supported by internal Funds of KU Leuven (PDM fellowship PDMT1/22/005). The third author is supported by the
Swedish research council (VR), grant 2022-04030. The fourth author is supported by the National Institutes of Health (R01GM126554).

\paragraph{Open access statement.} For the purpose of open access,
a CC BY public copyright license is applied
to any Author Accepted Manuscript (AAM)
arising from this submission.

\bibliographystyle{abbrv}
\bibliography{ref}

\end{document}